\newif\ifblog
\newif\iftex
\def\em{\it}
\def\emph#1{\textit{#1}}
\def\bb{{\bf b}}
\def\bv{{\bf v}}
\newtheorem{theorem}{Theorem}
\newtheorem{lemma}[theorem]{Lemma}
\newtheorem{definition}[theorem]{Definition}
\newtheorem{corollary}[theorem]{Corollary}
\newtheorem{proposition}[theorem]{Proposition}
\newtheorem{example}[theorem]{Example}
\newtheorem{remark}[theorem]{Remark}
\newtheorem{assumption}{Assumption}
\title{Exit problems as the generalized solutions of Dirichlet problems}
\author{Yuecai Han,
\thanks{School of Mathematics, Jilin University. hanyc@jlu.edu.cn.}
\and {Qingshuo Song,
\thanks{Department of Mathematical Sciences,
Worcester Polytechnic Institute, and Department of Mathematics, City
University of Hong Kong. qsong@wpi.edu.}
}
\and {Gu Wang
\thanks{Department of Mathematical Sciences,
Worcester Polytechnic Institute. gwang2@wpi.edu. }}
}
\begin{document}

\maketitle

\begin{abstract}

This paper investigates sufficient conditions
for a Feynman-Kac functional up to an exit time
to be the generalized viscosity solution of a Dirichlet problem.
The key ingredient is to find out the continuity of exit operator under
Skorokhod topology, which reveals the intrinsic
connection between overfitting Dirichlet boundary and fine topology.
As an application, we establish the sub and supersolutions for
a class of non-stationary HJB (Hamilton-Jacobi-Bellman) equations with fractional Laplacian operator via Feynman-Kac functionals associated to $\alpha$-stable processes, which help verify the solvability of the original HJB equation.

\vspace{.3in}
\noindent
{\bf Keywords} Stochastic control problem, HJB equation, Dirichlet boundary, Generalized viscosity solution, $\alpha$-stable process, Fractional Laplacian operator, Fine topology.

\end{abstract}
\vspace{.8in}

\newpage
\section{Introduction}\label{sec:intro}
In this paper we investigate the solvability of a Dirichlet partial differential equation (PDE) given by
\begin{equation}
 \label{eq:pde01}
   - \mathcal L u(x) + \lambda u(x) - \ell(x) = 0 \hbox{ on } O, \hbox{ with }
  u = g \hbox{ on } O^{c},
\end{equation}
where $\mathcal L$ is the infinitesimal generator associated to some
Feller semigroup $\{P_{t}: t\ge 0\}$ and $O$ is a connected 
bounded open set in $\mathbb R^{d}$ for some positive integer $d$ (see Assumption \ref{assumption} below for more detail). We will adopt the ``verification'' approach and characterize the solution to  \eqref{eq:pde01} by the associated  stochastic representation $v(x)$
given by the Feynman-Kac functional:
\begin{equation}
\label{eq:fk01}
v(x) :=
\mathbb E^{x} \left[\int_{0}^{\zeta} e^{-\lambda s} \ell(X_s) ds + e^{-\lambda \zeta}
g( X_\zeta) \right],
\end{equation}
where
$X$ is C\`adl\`ag Feller with generator $\mathcal L$, denoted by $X\sim \mathcal L$,  
and $\zeta$ is the exit time from the closure of the domain $\bar O$, denoted by $\zeta = \tau_{\bar O}(X)$.

The scope of a generator $\mathcal L$ associated to a Feller process 
covers many well known operators. For instance, the
gradient operator
$\nabla$ corresponds to a uniform motion, the Laplacian $\Delta$ 
corresponds to a Brownian motion, the 
fractional Laplacian $-(-\Delta)^{\alpha/2}$ 
corresponds to a symmetric $\alpha$-stable process, and any linear combinations of the above operators corresponds to a Feller process.

Due to this versatility of the operators, the study of
an elliptic or parabolic PDE and its interplay
with the corresponding stochastic representation 
has a wide range of applications
and many successful connections to other disciplines outside 
of mathematics. For instance in mathematical finance,
the general approach to the derivative pricing
is either given by the solution of a Cauchy problem or derived via the so called 
martingale approach (see \cite{KS98}).
The most well-known and practical tool in this direction is the
Feynman-Kac formula (see Chapter 8 of \cite{Oks03}). However, a rigorous verification that connects the PDE 
and the stochastic representation
is a difficult task in general, due to the subtle boundary 
behavior of the underlying random process.
In the case of Laplacian operator $\mathcal L = \Delta$, \cite{CW05} (Sections 4.4 and 4.7) shows that the Feynman-Kac functional $v$ of
\eqref{eq:fk01} solves \eqref{eq:pde01}. With $\mathcal L$ being a second order differential operator, and thus almost surely continuous $X\sim \mathcal L$, the relation
between the Feynman-Kac functional and the Dirichlet problem is discussed
in \cite{BSY11, FP15, FS06, GW16, KO02, HS00} and the references therein. 

If $\mathcal L$ is a non-local operator corresponding to a L\'evy jump diffusions, which 
have become popular in the recent development in financial modeling, 
see \cite{CT04, NOP09, OS05}, the discontinuity of the
random path $X\sim \mathcal L$ brings extra difficulty in studying the
boundary behavior (see \cite{BL84, GMS17,Ron97,ZYB15}). To the best of our knowledge, the
verification for a Feynman-Kac functionals to be a solution, or even a  generalized viscosity solution (as discussed in this paper) of the Dirichlet PDE, has not been thoroughly studied for jump diffusion
in the extant literature.
A few closely related papers, such as
\cite{CV05} for one dimensional non-stationary problem and
\cite{BS18} for multi-dimensional stationary problem,
provide the following partial answer: $v$ of \eqref{eq:fk01}
is the (strong hence a generalized)
viscosity solution of \eqref{eq:pde01}, if
{\it all points on the boundary $\partial O$ are regular} (see definition of the regularity in 
Section \ref{sec:main}). 


However, this sufficient condition is not always satisfied, and 
a simple 
example below (see Section \ref{sec:e01} with $\epsilon = 0$) provides 
an explicit calculation for a Feynman-Kac functional, which 
is not a  (strong) viscosity solution but only
a generalized viscosity solution. In this paper we focus on the sufficient conditions for $v$ of \eqref{eq:fk01} to be a generalized solution of \eqref{eq:pde01}, which turn out to be much more involved than that for a strong viscosity solution (see  Theorem \ref{t:main01} for details).

Next in Section \ref{sec:setup}, we present the precise setup, 
definition of the (strong) viscosity solution and 
generalized viscosity solution, and the main result.
To avoid unnecessary confusion, we emphasize that ``strong'' v.s. ``generalized'' are in contrast for the classification of viscosity solution
according to its boundary behavior\footnote{Another possible classification is ``classical'' v.s. ``viscosity'' or ``weak'' solution
	by its smoothness.},
see Definitions \ref{d:vis02} and \ref{d:vis04}, and in this paper, we only focus on the 
generalized viscosity solution.
Section \ref{sec:anal} provides the analysis leading to the
sufficient conditions for the existence of
the generalized viscosity solution for a class of Dirichlet problems, which proves the main theorem.

As part of the motivation for this paper, the study of the solvability of \eqref{eq:pde01}  is also closely related to the solvability of a class nonlinear PDEs, for example,  Hamilton-Jacobi-Bellman (HJB) equations. 
One of the major existing analytical approaches to solving  HJB equations, in the sense of generalized solutions, is a combination of
comparison principle (CP) and Perron's method (PM) (see \cite{CIL92} and \cite{BCI08}, and the references therein).
Such an 
approach successfully establishes the unique solvability 
under the assumption that
{\it there exists a supersolution and a subsolution}.
In other words, it reduces 
the solvability question of nonlinear PDE into that of a
class of 
linear PDEs of the type  \eqref{eq:pde01}.
However, 
the answer to the latter is not trivial and was proposed as an open question 
for the general case in Example 4.6 of \cite{CIL92}.  In Section \ref{sec:app}, by applying the main result (Theorem \ref{t:main01}) in this paper,
we are able to prove the solvability of a class of linear equations, which serve as sub- and supersolutions to nonlinear equations with fractional Laplacian operators, and help establish the existence of solutions to the latter.
At the end, we include a brief summary and some technical results are relegated to appendices.

\section{Problem setup and definitions}
\label{sec:setup}
In this section, we start from the  definition of an appropriate filtration, under which  $v$ in \eqref{eq:fk01} can be characterized in terms of a stochastic exit problem. Then we formally define the generalized viscosity solution to the Dirichlet problem and state the main result of this paper - a sufficient condition for $v$ of \eqref{eq:fk01} to be a generalized viscosity solution of \eqref{eq:pde01}, which is proved in the next section. To motivate the analysis of the generalized solution, we first provide an example of a Dirichlet problem, for which the associated Feynman-Kac functional is not its solution, but only a generalized solution. 

\subsection{An example} \label{sec:e01}
For illustration purpose,
consider a Dirichlet problem of \eqref{eq:pde01}
with the following simplified setup, parameterized by $\epsilon\geq 0$:
\begin{equation}
\label{eq:sete} 
O = (0, 1), \  \ell \equiv 1, \
\mathcal L^{\epsilon} u =  \frac 1 2  \epsilon^{2}
u'' + u', \
\lambda =1, g\equiv 0. 
\end{equation}
Then, \eqref{eq:pde01} becomes a second order ordinary differential equation (ODE) 
\begin{equation}\label{eq:pde02}
- u' - \frac 1 2  \epsilon^{2}  u'' + u - 1 = 0 \hbox{ on } (0, 1), \hbox{ and } u(x) = 0 \text{ for } x\geq 1 \text{ and } x\leq 0.
\end{equation}
If $\epsilon >0$, then there exists unique $C^{2}(O) \cap C(\bar O)$ solution\footnote{The explicit solution is obtained by SageMath code implemented by a cloud computing platform {\it CoCalc}, see
	{https://github.com/songqsh/181023PubExit} .}
\begin{equation}
u (x) = 1 +
\frac{(1 - e^{\lambda_{1}}) e^{\lambda_{2} x} + (e^{\lambda_{2}}-1) e^{\lambda_{1} x}}
{e^{\lambda_{1}} - e^{\lambda_{2}}},\label{eq:ode}
\end{equation}
where
$$\lambda_{1} = \frac{\sqrt{1 + 2 \epsilon^{2}} - 1}{\epsilon^{2}}, \
\hbox{ and }
\lambda_{2} = \frac{- \sqrt{1 + 2\epsilon^{2}} - 1}{\epsilon^{2}}.$$

If $\epsilon =0$, then PDE \eqref{eq:pde02} has no solution. 
However, if one removes the boundary condition 
imposed to $0$, PDE \eqref{eq:pde02}  has 
a unique solution $u(x) = - e^{-1+ x} +1$.

On the other hand, from probabilistic perspective: Let 
$\left(\hat\Omega, \hat{\mathcal F},\hat{\mathbb P}, \{\hat{\mathcal F_{t}}: t\ge 0\}\right)$ be a filtered probability space satisfying the usual conditions 
with a standard Brownian motion $W$, and
$X$ be a stochastic process defined by
$$X_{t}  =  x + t + \epsilon W_{t},$$
of which the generator is $\mathcal L^{\epsilon}$ above.
The corresponding Feynman-Kac functional of the form \eqref{eq:fk01} is
\begin{equation}
\label{eq:fk02}
v_{\epsilon}(x) :=
\hat {\mathbb E} \left[\int_{0}^{\zeta} e^{- s}  ds  \Big| X_{0} =x\right]
\end{equation}
with $\zeta$ being the exit time from the closure of the domain $\bar O$, of which the distribution can be explicitly computed,
and $\mathbb E$ being the expectation under $\mathbb P$. Note that
\begin{itemize}
	\item  If $\epsilon >0$, then $v_{\epsilon}$ of \eqref{eq:fk02} coincides with \eqref{eq:ode}, and is the unique solution of \eqref{eq:pde01}.
	\item If $\epsilon = 0$, then 
	$v_{0}(x) = - e^{-1+ x} +1$ is not a solution of \eqref{eq:pde02}, because
it does meet the boundary condition at $x = 0$. 
\end{itemize}
However, if $\epsilon =0$ and the boundary condition imposed on the point $0$ is dropped, then $v_{0}$ is identical to the solution of the new ODE, and is called the generalized solution to the original equation \eqref{eq:pde02}. In this paper, we investigate the sufficient conditions under which the associated Feynman-Kac is a solution to the Dirichlet PDE in the above general sense (by relaxing the boundary condition). As we will show in the following, $v_{0}$ is a generalized viscosity solution of \eqref{eq:pde02} according to Definition \ref{d:vis04}.
\subsection{Setup}
Let $\Omega = \mathbb D^{d}$ be
the space of  
C\`adl\`ag functions from $[0, \infty)$ to $\mathbb R^{d}$
with Skorokhod metric $d_o$. $X$ is the coordinate mapping process, i.e.
$$X_{t}(\omega) = \omega(t), \ \forall \omega \in \Omega.$$
Denote the natural filtration generated by $X$ as
$$\mathcal F_{t}^{0} = \sigma\{X_{s}: s\le t\}, \ \forall t\ge 0, 
\hbox{ and }
\mathcal F^{0} = \sigma(X_{s}: 0 \le s < \infty\}.$$
Denote as $C^m_{0}(\mathbb R^{d})$ and $C^{m,\alpha}_{0}(\mathbb R^{d})$
the space of functions on $\mathbb R^{d}$ with continuous and locally $\alpha$-H\"older continuous derivatives, respectively, up to $m^{th}$ order, which
vanish at infinity, and for $C^m_{0}(\mathbb R^{d})$, the superscript is dropped if $m=0$. Let $\{P_{t}: t\ge 0\}$ be a Feller 
semi-group on $C_{0}(\mathbb R^{d})$ (see Definition III.2.1 of \cite{RY99}). With $\mathcal P(\mathbb R^{d})$ being the collection of probability measures on $\mathbb R^{d}$, by
Daniell-Kolmogorov theorem and standard path regularization, for any $\nu\in \mathcal P(\mathbb R^{d})$, there exists probability measure $\mathbb P^{\nu}$ on
$(\Omega, \mathcal F^{0})$ with its transition function identical to 
the given Feller semi-group $\{P_{t}:t\geq 0\}$ and initial distribution $X_{0}\sim \nu$ (see Section III.7 of \cite{RW00}). Denote as $\mathbb E^\nu$ the expectation under $\mathbb P^\nu$ for $\nu\in \mathcal P(\mathbb R^{d})$, and $\mathbb E^x = \mathbb E^{\delta_x}$ for the Dirac measure $\delta_x$ with $x\in \mathbb R^{d}$. We make the following assumptions for the rest of the paper without further mentioning:

\begin{assumption}\label{assumption}
	
	\begin{enumerate}
		\item $\{P_t: t\ge 0\}$ is a Feller semi-group with its infinitesimal generator $\mathcal L$ satisfying $C_0^\infty(\mathbb R^d) \subset \mathcal D(\mathcal L)$, where
		$\mathcal D(\mathcal L)$ the domain of $\mathcal L$;
		\item 
		$(\Omega, \mathcal F, \{X_t: t\ge 0\}, \{\mathbb P^\nu: \nu\in \mathcal P(\mathbb R^d)\})$ is the 
		canonical setup of Feller process associated to the Feller semi-group 
		$\{P_{t}, t\ge 0\}$, denoted by $X\sim \mathcal L$;
		
		\item  $O$ is a connected
		bounded open set in $\mathbb R^{d}$;
		\item $g$ and $\ell$ are Lipschitz continuous functions vanishing at infinity.
		\item  $\lambda >0$.
	\end{enumerate}
\end{assumption}

Given a Borel set $B$ in $\mathbb R^{d}$ and a sample path $\omega \in \mathbb D^{d}$, define the exit time $\tau_{B}(\omega)$ and
the exit point $\Pi_{B}(\omega)$ as:
\begin{equation}\label{eq:tau11}
 \tau_{B} (\omega) = \inf\{t > 0, \omega_{t} \notin B\}, \quad \Pi_{B} (\omega) 
 = \omega({\tau_{B}(\omega)}) =  X_{\tau_{B}(\omega)}(\omega), \quad \forall \omega\in \mathbb D^{d},
\end{equation}
and for notational convenience, denote 
\begin{equation}\label{eq:zeta00}
\zeta := \tau_{\bar O}, \ \Pi := \Pi_{\bar O} \hbox{ and }
\hat \zeta := \tau_{O}, \ \hat\Pi := \Pi_{O}. 
\end{equation}

In general, $\tau_{B}$ is not necessarily an $\mathcal F_{t}^{0}$-stopping time, 
because the set $\{\tau_{B} \le t\} \in \mathcal F^{0}_{t+} = \cap_{s>t} \mathcal F^{0}_{t}$, and the natural filtration is not always right continuous, i.e. $\mathcal F^0_{t+} \neq \mathcal F^0_{t}$. Thus we modify the natural filtration, based on the following observation: the natural filtration does not depend on any probability - $\mathcal F^{0}_0$ contains only deterministic events, and therefore $\mathcal F^{0}_{0} \neq \mathcal F^{0}_{0+}$. But their difference are only those ``almost deterministic events". For example, if we focus on canonical Wiener measure on the path space, then Blumenthal 0-1 law implies that any event of $A\in \mathcal F^{0}_{0+} \setminus \mathcal F^{0}_{0}$ happens only with probability one or zero. This motivates us to reshuffle the natural filtration by moving the ``almost deterministic" sets to the past information set, i.e. the $\sigma$-algebra at $t=0$.

\begin{definition}\label{filtration}
For each $\nu\in \mathcal P(\mathbb R^{d})$,  denote as 
 $\{\mathcal F_{t}^{\nu}: t\ge 0\}$ the $\mathbb P^{\nu}$-completion of 
 natural filtration, i.e. $\mathcal F_{t}^{\nu} = \sigma(\mathcal F_{t}^{0}, \mathcal N^{\nu})$, where
 $\mathcal N^{\nu}$ is the collection of all $\mathbb P^{\nu}$-null sets. Let $\mathcal F_{t} = \bigcap_{\nu \in \mathcal P(\mathbb R^{d})} \mathcal F_{t}^{\nu}$ for each $t\geq 0$.

\end{definition}

In the above definition, we adopt the  
usual augmentation by
manipulating negligible sets, and move the {\it universally almost deterministic sets} (deterministic with respect all probability measures in $\mathcal P(\mathbb R^d)$) to $\mathcal F_0$, without changing the value of
$\mathbb E^{x} [F|\mathcal F_{t}^{0}]$ 
for any $\mathcal F^0$-measurable random variable $F$. Furthermore, the Feller property asserts that,
(a) the filtration
$\{\mathcal F_{t}: t\ge 0\}$ (also $\{\mathcal F_{t}^{\nu}: t\ge 0\}$ for each $\nu$) 
is right continuous, 
(b) Blumenthal's zero-one law holds, and 
(c)
$\tau_{B}$ is an $\mathcal F_{t}$-stopping time (the Debut theorem, 
see Proposition III.2.10 and Theorems III.2.15 and III.2.17 of \cite{RY99}).
Last but not least, the strong Markov property holds with the filtration $\{\mathcal F_{t}: t\ge 0\}$ (see Theorem III.3.1 of \cite{RY99}).

For the rest of the paper, we work with the filtration $\{\mathcal F_{t}: t\ge 0\}$ defined above, and then $v$ of \eqref{eq:fk01} in the stochastic exit problem can be written as $v(x) := \mathbb E^{x} [F]$, where $F: \mathbb D^{d} \mapsto \mathbb R$  is 
$$F(\omega) = \int_{0}^{\zeta(\omega)} e^{-\lambda s} \ell(\omega_{s}) ds
+ e^{-\lambda \zeta(\omega)} g \circ \Pi(\omega),
\ \forall \omega \in \mathbb D^{d}.$$

Finally, recall the simplified setup in \ref{sec:e01}, where the 
associated 
process $X$ is defined as a function of a Brownian motion in a filtered 
probability space 
$\left(\hat \Omega, \hat{\mathcal F}, \hat {\mathbb P}, \{\hat{\mathcal F_{t}}: t\ge 0\}\right)$. To see the equivalence between this setup and the above definition with coordinate mapping process, for $X$
in Section \ref{sec:e01}, one can first induce a family of probabilities
$\mathbb P^{\nu}$ on space $\mathbb D^{d}$ associated to initial distribution $\nu\in \mathcal P(\mathbb R^{d})$, i.e. $\mathbb P^{\nu}(A) 
= \hat {\mathbb P}(X\in A | X_{0} \sim \nu)$ for any Borel set $A \in \mathbb D^{d}$. The distribution of coordinate mapping is identical to 
the distribution of $X$ (see Section II.28 of \cite{RW00}). Then starting from a natural filtration $\{\mathcal F_{t}^{0}:t\geq 0\}$ of coordinate mapping,
one can generate a family of $\{\mathcal F_{t}^{\nu}: t\ge 0\}$ with $\nu\in\mathcal P(\mathbb R^d)$, and $\{\mathcal F_{t}: t\ge 0\}$ as defined in Definition \ref{filtration} above.

\subsection{Dirichlet problems and Viscosity Solutions} \label{sec:def}
In this section, we give the definition of the generalized viscosity solution. For simplicity, denote
\begin{equation}
 \label{eq:G}
G(\phi, x) = - \mathcal L \phi(x) + \lambda \phi(x) - \ell(x),
\end{equation}
and \eqref{eq:pde01} becomes
\begin{equation}
 \label{eq:pde55}
 G(u, x) = 0, \hbox{ on } O \hbox{ and } u = g \hbox{ on } O^{c}.
\end{equation}
Note that the Dirichlet boundary data $g$ is given to the entire $O^{c}$. The reason is that,
if the generator $\mathcal L$ is non-local, $X_{\zeta}$ may fall in
anywhere in $O^{c}$, and the above definition of $G$ makes sure that for $(u, x) \in C_{0}^{\infty}(\mathbb R^{d}) \times \mathbb R^{d}$,  the value $G(u, x)$  is well-defined.
To generalize the definition of
\eqref{eq:pde55} to a possibly non-smooth function with domain $\bar O$,
we use the following test functions in place of $u$:\footnote{
$f\in USC(\bar O)$ means $f$ is upper semicontinuous in $\bar O$, and $f\in LSC(\bar O)$ means $-f \in USC(\bar O)$. Moreover, $f^{*}$ and $f_{*}$ are USC and LSC envelopes of $f$,
respectively. $I_{A}(\cdot)$ is the indicator function of the set $A$.
}
\begin{enumerate}
 \item For a given $u \in USC (\bar O)$ and $x\in  \bar O$,  the space of supertest functions is
$$ J^{+} (u, x) = \{\phi \in C_{0}^{\infty}(\mathbb R^{d}),
\hbox{ s.t. } \phi \ge (u I_{\bar O} + g  I_{{\bar O}^{c}})^{*} \hbox{ and } \phi(x) = u(x)\}.$$
\item
For a given $u \in LSC (\bar O)$ and $x\in  \bar O$, the space of
subtest functions is $$ J^{-} (u, x) = \{\phi \in C_{0}^{\infty}(\mathbb R^{d}),
\hbox{ s.t. } \phi \le (u I_{\bar O} + g  I_{{\bar O}^{c}})_{*} \hbox{ and } \phi(x) = u(x)\}.$$

\end{enumerate}
We say that a function $u \in USC(\bar O)$ satisfies the viscosity
 subsolution property at some $x\in  \bar O$,
 if the following inequality holds for all  $\phi \in J^{+} (u,x)$,
\begin{equation}
 \label{eq:sub11}
G(\phi, x)  \le 0.
\end{equation}

Similarly, a function $u \in LSC(\bar O)$
 satisfies the viscosity
 supersolution property at some  $x\in \bar O$,
 if the following inequality holds for all  $\phi \in J^{-} (u,x)$,
\begin{equation}
 \label{eq:sup11}
G(\phi, x)  \ge 0.
\end{equation}

In the following we define the (strong)
viscosity solution of \eqref{eq:pde01}. Note that it does not require the viscosity property
at any point $x\in \partial O$. However, the viscosity solution property at $x\in \partial O$ will be needed in the definition of the  generalized viscosity solution introduced later.

\begin{definition}
 \label{d:vis02}
\begin{enumerate}
 \item $u \in USC(\bar O)$ is a viscosity
 subsolution of \eqref{eq:pde01}, if (a) $u$ satisfies the viscosity subsolution property at each $x\in O$ and (b) $u(x) \le g(x)$ at each $x\in \partial O$.
 \item  $u \in LSC(\bar O)$ is a viscosity
 supersolution of \eqref{eq:pde01}, if (a) $u$ satisfies the viscosity supersolution property at each $x\in O$ and (b) $u(x) \ge g(x)$ at each $x\in \partial O$.

\item $u\in C(\bar O)$ is a viscosity solution of \eqref{eq:pde01},
if it is a viscosity subsolution and supersolution simultaneously.
 \end{enumerate}
\end{definition}

Recalling the setup \eqref{eq:sete}  above, the associated stochastic representation
 $v_{\epsilon}$ of \eqref{eq:fk02} is
the viscosity (indeed a classical) solution of equation \eqref{eq:pde01} if $\epsilon >0$. It is not anymore for $\epsilon = 0$ due to the loss of the boundary
$v_{0}(0) >0$. The next is the definition of generalized viscosity solution, as discussed in, for instance \cite{BCI08}.

\begin{definition}
 \label{d:vis04}
\begin{enumerate}
 \item $u \in USC(\bar O)$ is a generalized viscosity
 subsolution of \eqref{eq:pde01}, if (a) $u$ satisfies the viscosity subsolution property at each $x\in O$ and (b)  $u$ satisfies at the boundary
 $$\min\{-\mathcal L \phi(x) + \lambda \phi(x) - \ell(x), u(x) - g(x)\} \le 0,
 \ \forall x\in \partial O \text{ and } \forall \phi \in J^{+}(u, x).$$
 \item $u \in LSC(\bar O)$ is a generalized viscosity
 supersolution of \eqref{eq:pde01}, if (a) $u$ satisfies the viscosity supersolution property at each $x\in O$ and (b) $u$ satisfies at the boundary
 $$\max\{-\mathcal L \phi(x) + \lambda \phi(x) - \ell(x), u(x) - g(x)\} 
 \ge 0,
 \ \forall x\in \partial O \text{ and } \forall \phi \in J^{-}(u, x).$$

\item
$u\in C(\bar O)$ is a generalized viscosity solution of \eqref{eq:pde01},
if it is the viscosity subsolution and supersolution simultaneously.
 \end{enumerate}
\end{definition}

\subsection{Main result}\label{sec:main}

Our main objective is to identify the sufficient condition which guarantees that $v$ of
\eqref{eq:fk01} is a generalized viscosity solution of \eqref{eq:pde01}, and the results are summarized in Theorem \ref{t:main01} below. As a preparation, we briefly 
recall some basic definitions on regular points, and the induced fine topology (see more details in
Section 3.4 of \cite{CW05}).

\begin{definition}
\begin{enumerate}
	\item A point $x$ is said to be regular (with respect to $\mathcal L$)
 for the set $B$ if and only if
 $\mathbb P^{x} (\tau_{B^{c}} = 0) = 1.$ 
Let $$\partial_0 O = \left\{x\in \partial O: x \text{ is regular for } \bar O^c\right\},$$ 
and $\partial_1 O = \partial O \setminus \partial_0 O$.
\item For any set $B$, the set of all regular points
for $B$ is be denoted by $B^{r}$ and $B^{*} = B \cup B^{r}$ is called the
fine closure of $B$. A set $B$ is finely closed if $B = B^{*}$, and $B^{c}$ is said to be finely open. The collection of all finely open sets generates the fine topology. 
\end{enumerate} 
\end{definition}
 
 By the above definition, if $x$ is not regular for $\bar O^c$, then 
$\mathbb P^{x} (\tau_{\bar O} = 0) < 1$, which implies that $\mathbb P^{x} (\tau_{\bar O} = 0) = 0$ due to
Blumenthal 0-1 law. 
In addition, since the process $X$ has right continuous paths and $O$ is an
open set,
any point $x \in \bar O^{c}$ is regular for $\bar O^{c}$ and any point $x\in O$ is not regular for $\bar O^{c}$. Thus, $\bar O^{c,r}$, the set of all regular points to $\bar O^{c}$, satisfies
$ \bar O^{c} \subset \bar O^{c,r} = \bar O^{c, *} \subset O^{c}$.
Therefore, $\partial_{0} O = \bar O \cap \bar O^{c,*}$ and 
$\partial_{1} O = \bar O\setminus \bar O^{c,*}$. 

 \begin{theorem}
 \label{t:main01}
 If there exists a neighborhood\footnote{A neighborhood of  $\partial_{1}O$ 
 	is an open set $N_{1} \in\mathbb R^{d}$ 
 	such that $\partial_{1} O \subset N_{1}$.} 
 $N_1$ of $\partial_1 O$ 
 such that
 $\mathbb P^x \left(\hat\Pi\in \bar N_1\right) = 0$ 
 for all $x\in \bar O$, then
  $v$ of \eqref{eq:fk01} is a 
 generalized viscosity solution of \eqref{eq:pde01}. Moreover\footnote{By definition $\Gamma_{out}$ depends on the function $g$, and should be denoted as $\Gamma_{out}[g]$, and the argument $g$ is omitted in the rest of the paper, unless ambiguity arises.},
 $$ \partial_{0} O \subset \{x\in \partial O: v = g\} := \Gamma_{out} .$$
\end{theorem}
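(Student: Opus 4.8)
The plan is to verify the three requirements of Definition \ref{d:vis04} together with the inclusion $\partial_0 O \subset \Gamma_{out}$, organizing the argument into (i) the easy identity $v=g$ on regular boundary points, (ii) the viscosity inequalities on $O \cup \partial_1 O$ obtained from dynamic programming plus Dynkin's formula, and (iii) the global continuity $v \in C(\bar O)$. The continuity in (iii) is where the hypothesis on $\hat\Pi$ enters, and it is the main difficulty.

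First, the identity $v=g$ on $\partial_0 O$ (which also gives the \emph{Moreover} clause) is immediate. If $x \in \partial_0 O$ then $x$ is regular for $\bar O^c$, so $\mathbb P^x(\zeta = 0) = 1$; since $\zeta = 0$ forces $\Pi = X_0 = x$ and annihilates the integral term in $F$, we get $F = g(x)$ $\mathbb P^x$-a.s. and hence $v(x) = g(x)$. This yields $\partial_0 O \subset \{x\in\partial O: v=g\} = \Gamma_{out}$, and moreover at every $x \in \partial_0 O$ we have $v(x)-g(x)=0$, so both the $\min$ and the $\max$ boundary conditions of Definition \ref{d:vis04} hold automatically there.

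Next I would establish the viscosity inequalities at every $x \in O \cup \partial_1 O$. Note that $F$ is bounded (as $g,\ell$ are bounded and $\lambda>0$) and measurable, so $v$ is a bounded Borel function. Write $\Psi = v I_{\bar O} + g I_{\bar O^c}$ and fix a supertest function $\phi \in J^{+}(v,x)$, so that $\phi \ge \Psi^{*} \ge \Psi$ and $\phi(x)=v(x)$. Suppose for contradiction that $G(\phi,x) = -\mathcal L\phi(x) + \lambda\phi(x) - \ell(x) > 0$. Since $\phi \in C_0^\infty(\mathbb R^d) \subset \mathcal D(\mathcal L)$ and the Feller property gives $\mathcal L\phi \in C_0(\mathbb R^d)$, the function $\mathcal L\phi - \lambda\phi + \ell$ is continuous and strictly negative on a ball $B_r(x)$. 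Set $\sigma = \tau_{B_r(x)}$ and $\rho = \zeta \wedge \sigma$. By the strong Markov property under $\{\mathcal F_t\}$ (valid by the results recalled after Definition \ref{filtration}),
\[
v(x) = \mathbb E^x\Big[\int_0^{\rho} e^{-\lambda s}\ell(X_s)\,ds + e^{-\lambda\rho}\Psi(X_\rho)\Big],
\]
because on $\{\rho = \sigma < \zeta\}$ one has $X_\rho \in \bar O$ with continuation value $v(X_\rho)$, while on $\{\rho=\zeta\}$ one has $X_\rho \in \bar O^c$ and $\Psi(X_\rho) = g(\Pi)$. Replacing $\Psi(X_\rho)$ by the larger $\phi(X_\rho)$ and applying Dynkin's formula to $e^{-\lambda t}\phi(X_t)$ gives
\[
0 \le \mathbb E^x\Big[\int_0^{\rho} e^{-\lambda s}\big(\mathcal L\phi - \lambda\phi + \ell\big)(X_s)\,ds\Big].
\]
For $s<\rho\le\sigma$ we have $X_s \in B_r(x)$, so the integrand is strictly negative; and $\rho>0$ $\mathbb P^x$-a.s., since $\sigma>0$ a.s. and, crucially, at $x \in \partial_1 O$ the point is \emph{not} regular for $\bar O^c$, whence Blumenthal's zero-one law gives $\mathbb P^x(\zeta>0)=1$. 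This forces the right-hand side to be strictly negative, a contradiction, so $G(\phi,x)\le 0$. At interior points this is the subsolution property, and on $\partial_1 O$ it makes $\min\{G(\phi,x), v(x)-g(x)\}\le 0$. The supersolution inequality $G(\phi,x)\ge 0$ for $\phi \in J^{-}(v,x)$ follows by the symmetric argument using $\phi \le \Psi_{*} \le \Psi$.

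Finally — and this is the step I expect to be the main obstacle — I must show $v \in C(\bar O)$, so that $v$ is simultaneously in $USC(\bar O)$ and $LSC(\bar O)$ and the inequalities above genuinely certify a solution in the sense of Definition \ref{d:vis04}. The strategy is to deduce continuity of $x \mapsto \mathbb E^x[F]$ from the $\mathbb P^x$-almost sure continuity of the exit functional $\omega \mapsto F(\omega)$ under the Skorokhod topology, combined with the Feller (weak) continuity of $x \mapsto \mathbb P^x$ and a Skorokhod-representation and dominated-convergence passage, using boundedness of $F$. The exit operator $\omega \mapsto (\zeta(\omega),\Pi(\omega))$ is discontinuous precisely along paths that leave $O$ through the irregular boundary $\partial_1 O$; the hypothesis $\mathbb P^x(\hat\Pi \in \bar N_1)=0$ for all $x\in\bar O$ guarantees that the first exit from the open set $O$ occurs almost surely away from a neighborhood of $\partial_1 O$, i.e. through regular points where the exit map is continuous. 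The hardest technical point is to pin down the set of discontinuity paths of the exit operator and to verify it is $\mathbb P^x$-null under this hypothesis, which I expect to rest on the continuity lemma for the exit operator under the Skorokhod topology developed in Section \ref{sec:anal}. Once $v \in C(\bar O)$ is secured, combining it with the interior inequalities, the $\partial_1 O$ boundary inequalities, and the equality $v=g$ on $\partial_0 O$ completes the proof that $v$ of \eqref{eq:fk01} is a generalized viscosity solution of \eqref{eq:pde01}.
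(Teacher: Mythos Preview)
Your overall structure matches the paper's: parts (i) and (ii) are essentially Lemma~\ref{l:gvis01}, and part (iii) is Lemma~\ref{l:v01} plus the continuity analysis of Section~\ref{sec:iii}. The boundary identity on $\partial_0 O$ and the Dynkin/DPP argument for the viscosity inequalities are fine.

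The genuine gap is in your step (iii). You assert that the exit operator $(\zeta,\Pi)$ is discontinuous ``precisely along paths that leave $O$ through the irregular boundary $\partial_1 O$'', and that the hypothesis $\mathbb P^x(\hat\Pi\in\bar N_1)=0$ therefore makes the discontinuity set $\mathbb P^x$-null for every $x\in\bar O$. This characterization is wrong: Example~\ref{e:05} exhibits a path $\omega_0(t)=t$ that \emph{starts} at $0\in\partial_1 O$ and exits $\bar O$ at $1\in\partial_0 O$, yet $\zeta$ is discontinuous at $\omega_0$ because nearby paths can dip below $0$ and exit immediately. Thus for $x\in\partial_1 O$ the pair $(\zeta,\Pi)$ is \emph{not} $\mathbb P^x$-a.s.\ continuous, even under the theorem's hypothesis, and your continuous-mapping argument for $v\in C(\bar O)$ fails at those points.

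The paper's fix, which your proposal is missing, is the domain-expansion trick of Proposition~\ref{p:con03}: set $O_1=N_1\cup O$ and work with the exit functional $(\zeta_1,\Pi_1)$ from $\bar O_1$. Every $x\in\partial_1 O$ is now an \emph{interior} point of $O_1$, so Proposition~\ref{p:con02} gives $\mathbb P^x$-a.s.\ continuity of $(\zeta_1,\Pi_1)$ for all $x\in\bar O$. The hypothesis $\mathbb P^x(\hat\Pi\in\bar N_1)=0$ is used not to control the discontinuity set of $(\zeta,\Pi)$ directly, but to show $\mathbb P^x(\zeta=\zeta_1,\ \Pi=\Pi_1)=1$, so that the auxiliary value $v_1$ built from $(\zeta_1,\Pi_1)$ coincides with $v$ on $\bar O$; continuity of $v$ then follows from that of $v_1$ via Lemma~\ref{l:v01}. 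Once you insert this expansion step, the rest of your outline goes through.
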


Before the proof of Theorem \ref{t:main01} in the next section, the following are some immediate applications. Further applications of Theorem \ref{t:main01} on non-stationary problem is provided in Section \ref{sec:app}.

According to Theorem \ref{t:main01}, the sufficient condition is closely 
related to the distribution of $X(\hat \zeta)$, the exit point of $X$ from $O$. Consider a special case where  every $\partial O$ is regular for 
$\bar O^{c}$ with respect to $\mathcal L$, for example, $\mathcal L = - (-\Delta)^{\alpha/2}$ with
$\alpha \ge 1$.
Then
$\partial_{0} O = \partial O$ and $\partial_{1} O = \emptyset$.
Hence, one can simply take $N_1 = \emptyset$ as the neighborhood of $\partial_{1} O = \emptyset$, which 
fulfills the condition of Theorem \ref{t:main01}, because $X(\hat \tau)$ 
does not fall in the empty set $\bar N_1$. Furthermore, $v(x) = g(x)$ for every $x\in\partial O = \partial O_0$, which recovers the result of \cite{BS18}:
\begin{corollary}
 \label{c:01}
 If every $x\in \partial O$ is regular to 
$\bar O^{c}$, then 
  $v$ of \eqref{eq:fk01} is a 
 strong viscosity solution of \eqref{eq:pde01}.
\end{corollary}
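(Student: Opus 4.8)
The plan is to obtain Corollary~\ref{c:01} as a direct specialization of Theorem~\ref{t:main01}, so that the only work is matching the boundary clauses of Definitions~\ref{d:vis02} and~\ref{d:vis04}. First I would record the structural consequence of the hypothesis: if every $x\in\partial O$ is regular for $\bar O^{c}$, then by the identification $\partial_{0}O=\bar O\cap\bar O^{c,*}$ established in the paragraph preceding the theorem, we have $\partial_{0}O=\partial O$, and hence $\partial_{1}O=\bar O\setminus\bar O^{c,*}=\emptyset$.

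Next I would verify that the hypothesis of Theorem~\ref{t:main01} holds with the degenerate choice $N_{1}=\emptyset$. Since $\partial_{1}O=\emptyset$ and the empty set is open, $N_{1}=\emptyset$ is a legitimate neighborhood of $\partial_{1}O$; moreover $\bar N_{1}=\emptyset$, so $\mathbb P^{x}(\hat\Pi\in\bar N_{1})=0$ holds vacuously for every $x\in\bar O$. Theorem~\ref{t:main01} then yields two conclusions simultaneously: $v$ of \eqref{eq:fk01} is a generalized viscosity solution of \eqref{eq:pde01}, and $\partial_{0}O\subset\Gamma_{out}$.

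The remaining step is to upgrade ``generalized'' to ``strong.'' By the first step, $\partial O=\partial_{0}O\subset\Gamma_{out}=\{x\in\partial O:v=g\}$, so $v(x)=g(x)$ for every $x\in\partial O$. The interior clauses (part (a)) of Definitions~\ref{d:vis02} and~\ref{d:vis04} coincide and are already supplied by the generalized-solution conclusion. For the boundary clauses (part (b)) of Definition~\ref{d:vis02}, the equality $v=g$ on $\partial O$ delivers simultaneously $v(x)\le g(x)$ (required of a strong subsolution) and $v(x)\ge g(x)$ (required of a strong supersolution). As $v\in C(\bar O)$, it follows that $v$ is a strong viscosity solution in the sense of Definition~\ref{d:vis02}.

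I do not anticipate a genuine obstacle here, since all of the analytic content is absorbed into Theorem~\ref{t:main01}. The only points meriting a moment's care are confirming that $\emptyset$ qualifies as an open neighborhood of $\partial_{1}O=\emptyset$ and that $\mathbb P^{x}(\hat\Pi\in\emptyset)=0$ is vacuously true, so that the theorem may legitimately be invoked; and then the elementary observation that a pointwise equality $v=g$ on $\partial O$ satisfies both one-sided inequalities in the strong boundary condition. Everything else is bookkeeping against the two definitions.
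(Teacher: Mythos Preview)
Your proposal is correct and follows essentially the same approach as the paper: the paper also observes that $\partial_{1}O=\emptyset$, takes $N_{1}=\emptyset$, invokes Theorem~\ref{t:main01}, and concludes $v=g$ on $\partial O=\partial_{0}O$, whence strong solvability. You have simply spelled out the verification against Definitions~\ref{d:vis02} and~\ref{d:vis04} in a bit more detail than the paper does.
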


As an example, in the setup \eqref{eq:sete}, if $\epsilon > 0$, 
then $\partial O = \partial_{0} O = \{0, 1\}$, and $v_{\epsilon}$ is the strong solution by Corollary \ref{c:01}. 
If $\epsilon = 0$, then $\mathbb P^{x}$ is the probability induced from the uniform motion 
$X(t) = x + t$. Thus  
$\partial_{1} O = \{0\}$ and $\partial_{0} O= \{1\}$.
By taking that $N_{1} = (-1/2, 1/2)$, $N_{1}$ satisfies the assumptions in Theorem \ref{t:main01} and 
$v_{0}$ is a generalized solution.

\section{The existence of generalized viscosity solution}\label{sec:anal}
This section is devoted to the proof of Theorem \ref{t:main01}. As the first step, we show that generalized viscosity solution property requires that on the boundary points, either the boundary condition, or the viscosity solution property is satisfied.

\subsection{The set of points losing the boundary condition}
In Section \ref{sec:e01},  $v_{0}$ of the setup of \eqref{eq:sete}  
with $\epsilon = 0$ is not a viscosity solution of \eqref{eq:pde01} 
due to the loss of the boundary at $x = 0$. One can actually directly verify that $v_{0}$ is the generalized solution according to Definition \ref{d:vis04} by checking its viscosity solution property at $x = 0$ (the argument for other points on [0,1] is straight forward):
\begin{enumerate}
 \item the space of supertest functions satisfies
 $$J^{+}(v_{0}, 0)  \subset \{ \phi \in C_{0}^{\infty}(\mathbb R): \phi(0) = v_{0}(0) = 1 - e^{-1}, \ \phi'(0) \ge v'_{0}(0+) = - e^{-1}\}.$$
Thus, $v_{0}$ satisfies subsolution property at $x = 0$
according to \eqref{eq:sub11}, and therefore,
 the inequality of Definition \ref{d:vis04} (1) holds;
 \item the space of subtest functions $J^{-}(v_{0}, 0)$ is an empty set, because $v_{0}(0) >0$ and
 $(v_{0} I_{\bar O})_*(0) = 0$, and it automatically implies its supersolution property at $x=0$.
\end{enumerate}

The above argument shows that although $v_{0}$ violates the boundary condition at $x = 0$, it satisfies the viscosity solution property according to
the definitions
 \eqref{eq:sub11} - \eqref{eq:sup11} at $x = 0$.
The next proposition generalizes the above observation:
A generalized solution shall satisfy either viscosity solution property
or boundary condition at every boundary point. 

\begin{proposition}
 \label{p:def01}
 A function $u \in C(\bar O)$ is
 a generalized viscosity solution of \eqref{eq:pde01} 
 if and only if  $u$ satisfies the viscosity solution property 
 at each $x\in \bar O \setminus \Gamma_{out}$.
\end{proposition}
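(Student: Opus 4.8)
The plan is to localize the whole statement to the boundary $\partial O$. On the open set $O$ the two notions agree verbatim: Definition \ref{d:vis04}(a) demands the sub- and supersolution inequalities \eqref{eq:sub11}--\eqref{eq:sup11} at each $x\in O$, which is exactly the viscosity solution property, and $O\subset\bar O\setminus\Gamma_{out}$. So it suffices to establish, for each fixed $x\in\partial O$, the pointwise dichotomy that the generalized boundary inequalities of Definition \ref{d:vis04}(b) hold \emph{if and only if} either $u(x)=g(x)$ (that is, $x\in\Gamma_{out}$) or $u$ has the full viscosity solution property \eqref{eq:sub11}--\eqref{eq:sup11} at $x$. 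Granting this, the proposition is assembled by quantifying over $x\in\partial O$: the boundary points where we are \emph{not} forced to verify the viscosity property are precisely those in $\Gamma_{out}$, so ``generalized viscosity solution'' is the same as ``viscosity solution property at every $x\in O\cup(\partial O\setminus\Gamma_{out})=\bar O\setminus\Gamma_{out}$.''

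First I would dispatch the easy half of the dichotomy. If $u(x)=g(x)$ then $u(x)-g(x)=0$, so $\min\{G(\phi,x),0\}\le 0$ and $\max\{G(\phi,x),0\}\ge 0$ hold for every admissible test function, giving Definition \ref{d:vis04}(b) immediately; and if instead $u$ has the viscosity solution property at $x$, then $G(\phi,x)\le 0$ on $J^{+}(u,x)$ and $G(\phi,x)\ge 0$ on $J^{-}(u,x)$ dominate the relevant $\min$ and $\max$, again yielding (b). This direction uses only the order relations among $G$, the $\min/\max$, and $u-g$, with no analysis of $O$; it is what powers the ``if'' implication of the proposition.

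The substance is the converse at a lost boundary point, say $u(x)>g(x)$ (the case $u(x)<g(x)$ being symmetric). The subsolution inequality comes for free: since $u(x)-g(x)>0$, the requirement $\min\{G(\phi,x),u(x)-g(x)\}\le 0$ collapses to $G(\phi,x)\le 0$ for every $\phi\in J^{+}(u,x)$, which is \eqref{eq:sub11}. The supersolution side is the difficulty, because $\max\{G(\phi,x),u(x)-g(x)\}\ge 0$ is automatically true and hence says nothing about $G$; to recover \eqref{eq:sup11} I would instead show there is nothing to test, i.e.\ $J^{-}(u,x)=\emptyset$. This is the crux, and it is exactly where the Dirichlet data on $\bar O^{c}$ enters: the outside values pull the lower envelope down so that $(u I_{\bar O}+g I_{\bar O^{c}})_{*}(x)\le g(x)<u(x)$, and then no smooth $\phi$ can satisfy both $\phi\le(u I_{\bar O}+g I_{\bar O^{c}})_{*}$ and the contact condition $\phi(x)=u(x)$. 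This is precisely the mechanism already seen for $v_{0}$ above, where $(v_{0}I_{\bar O})_{*}(0)=0<v_{0}(0)$ forces $J^{-}(v_{0},0)=\emptyset$.

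I expect the main obstacle to be making the envelope bound airtight at \emph{every} lost boundary point rather than only at those accumulated by $\bar O^{c}$. The inequality $(u I_{\bar O}+g I_{\bar O^{c}})_{*}(x)\le g(x)$ needs $\bar O^{c}$ to cluster at $x$; at an anomalous point $x\in\partial O\cap\mathrm{int}\,\bar O$ the envelope instead equals $u(x)$ and $J^{-}(u,x)$ may be nonempty. To close this gap I would exploit that $O$ is dense in $\bar O$ together with the continuity of $u$ and of $G(\phi,\cdot)$ (the latter valid since $\phi\in C_0^\infty(\mathbb R^d)\subset\mathcal D(\mathcal L)$ forces $\mathcal L\phi\in C_0$), and propagate the interior supersolution property to $x$ by a standard stability argument: a smooth $\phi$ touching $u$ from below at such an $x$ can be perturbed so that the perturbed minimum is attained at interior points $x_{n}\in O$ with $x_{n}\to x$, and passing to the limit in $G(\phi,x_{n})\ge 0$ gives \eqref{eq:sup11} at $x$. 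Combining the forced subsolution inequality, the emptiness of $J^{-}(u,x)$, and (where needed) this propagation yields the full viscosity solution property at each $x\in\partial O\setminus\Gamma_{out}$, which completes the converse and hence the proposition.
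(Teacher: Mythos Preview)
Your argument tracks the paper's proof closely: both localize to $\partial O$, dispatch the ``if'' direction as immediate from the $\min/\max$ structure, and for the ``only if'' direction at a lost boundary point argue that one of the test-function sets $J^{\pm}(u,x)$ is empty (making that half of the viscosity property vacuous) while the other half is forced by the sign of $u(x)-g(x)$ in the generalized boundary inequality. The paper is terser---it simply asserts $J^{+}(u,x)=\emptyset$ when $u(x)<g(x)$ and declares the case $u(x)>g(x)$ ``similar''---but the mechanism is identical to yours.

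Where you go beyond the paper is in flagging the anomalous points $x\in\partial O\cap\mathrm{int}\,\bar O$: there the envelope bound $(uI_{\bar O}+gI_{\bar O^{c}})_{*}(x)\le g(x)$ can fail (no sequence from $\bar O^{c}$ converges to $x$), so the emptiness of $J^{-}(u,x)$ is not automatic. The paper does not address this, so under the standing hypothesis that $O$ is merely a connected bounded open set (a slit disk already exhibits such points) you have in fact spotted a gap in its proof. Your proposed propagation fix---perturbing the test function so the contact point slides into $O$ and then passing to the limit in $G(\phi,x_{n})\ge 0$ using the continuity of $\mathcal L\phi$---is the right instinct, but be aware that for a nonlocal $\mathcal L$ any global modification of $\phi$ alters $G(\phi,\cdot)$ everywhere, so the usual strict-minimum trick must be executed with care (for instance, subtracting a nonnegative smooth bump $\psi$ with $\psi(x)=0$ keeps $\phi-\psi\in J^{-}(u,x)$ and, since generators of Feller semigroups satisfy the positive maximum principle, only shifts $G$ in the favorable direction). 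Fleshing that step out would make your proof strictly more complete than the paper's.
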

 
\begin{proof}
The sufficiency is straight forward by checking Definition \ref{d:vis04}. For necessity, if $u$ is a generalized viscosity solution, then according to Definition \ref{d:vis04}, it satisfies the viscosity solution property at every $x\in O$.  For $x\in \partial O\setminus \Gamma_{out}$, if $u(x) < g(x)$, then since $u\in USC(\bar O)$ and
$J^{+}(u, x) = \emptyset$, it implies the viscosity subsolution property at $x$. On the other hand, since $u\in LSC(\bar O)$, and therefore $\max\{-\mathcal L \phi(x) + \lambda \phi(x) - \ell(x), u(x) - g(x)\} \ge 0$ for every $\phi\in J^-(u,x)$, it implies that the viscosity supersolution property is satisfied. The case of $u(x) > g(x)$ follows similar arguments.
\end{proof}
 
Concerning whether $v$ of \eqref{eq:fk01} a generalized
viscosity solution of \eqref{eq:pde01}, this question can now be divided into the following two subquestions, of which the answers are provided in the next sections:
\begin{enumerate}
	\item Does $v$ of satisfies the condition in Proposition \ref{p:def01}?
	\item If yes, where does $v$ meet its boundary?  i.e. what is $\Gamma_{out}$?
\end{enumerate}

\subsection{Sufficient conditions - I}
In general, Assumptions \ref{assumption} can not guarantee the conditions in Proposition \ref{p:def01}, and thus $v$ of \eqref{eq:fk01}
may not be a generalized solution of \eqref{eq:pde01}, as shown in Example \ref{exm:15} below. In this subsection, Lemmas \ref{l:gvis01} and \ref{l:v01}
point out a sufficient condition:
\begin{itemize}
 \item $\zeta: \mathbb D^{d} \mapsto \mathbb R$ and $\Pi: \mathbb D^{d} \mapsto \mathbb R^{d}$ are continuous  almost surely under $\mathbb P^{x}$
\end{itemize}

Notice that it is standard, by using Ito's formula on test functions, that an interior point $x$ of
the domain $\bar O$ satisfies the viscosity solution property
given that $v$ of \eqref{eq:fk01} is continuous. Next, Lemma \ref{l:gvis01} shows that the same statement holds as long as
$x$ is an interior point of $\bar O$ in the fine topology, i.e. $x$ is not regular for $\bar O^{c}$.

\begin{lemma}
 \label{l:gvis01}
 If $v\in C(\bar O)$, then $v$ of \eqref{eq:fk01}
 is a generalized viscosity solution of \eqref{eq:pde01},
 with $$\Gamma_{out} \supset \partial_{0} O.$$
\end{lemma}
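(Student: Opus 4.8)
The plan is to reduce the statement to Proposition \ref{p:def01} by separately establishing two facts: (i) that $v=g$ on every regular boundary point, which yields $\Gamma_{out}\supset\partial_{0}O$ and confines the set on which the viscosity solution property must be checked to $\bar O\setminus\Gamma_{out}\subset\bar O\setminus\partial_{0}O=O\cup\partial_{1}O$; and (ii) that $v$ enjoys the viscosity solution property at every such \emph{finely interior} point, i.e. at every $x$ that is not regular for $\bar O^{c}$. Since $v\in C(\bar O)$ by hypothesis, $v$ lies simultaneously in $USC(\bar O)$ and $LSC(\bar O)$, so once (i) and (ii) are in place Proposition \ref{p:def01} immediately delivers the conclusion.

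For (i), fix $x\in\partial_{0}O$. By definition $x$ is regular for $\bar O^{c}$, which by the convention recalled before the theorem means $\mathbb P^{x}(\zeta=0)=1$ (here $\zeta=\tau_{\bar O}$). On the event $\{\zeta=0\}$ the running-cost integral in \eqref{eq:fk01} vanishes and $\Pi=X_{0}=x$, so the integrand of $v$ collapses to $g(x)$; taking expectation gives $v(x)=g(x)$. Hence $\partial_{0}O\subset\{x\in\partial O: v=g\}=\Gamma_{out}$, which is exactly the ``Moreover'' part of the lemma and shrinks the target set for (ii).

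For (ii), fix $x$ not regular for $\bar O^{c}$, so by Blumenthal's zero--one law $\mathbb P^{x}(\zeta>0)=1$. Write $\psi:=v I_{\bar O}+g I_{\bar O^{c}}$. I would argue the subsolution inequality \eqref{eq:sub11} by contradiction: suppose $\phi\in J^{+}(v,x)$ with $G(\phi,x)>0$. Since $\phi\in C_{0}^{\infty}(\mathbb R^{d})$ the map $y\mapsto G(\phi,y)$ is continuous, so $G(\phi,\cdot)>0$ on a ball $B(x,\rho)$; let $\theta_{\rho}$ be the exit time from $B(x,\rho)$ and set $\theta=\theta_{\rho}\wedge\zeta$, which is strictly positive a.s. because $x$ is the centre of the open ball and $\zeta>0$ a.s. Applying Dynkin's formula to $\phi\in\mathcal D(\mathcal L)$ with discount $\lambda$ and optional sampling at $\theta$ gives
\[
\mathbb E^{x}\!\left[e^{-\lambda\theta}\phi(X_{\theta})+\int_{0}^{\theta}e^{-\lambda s}\ell(X_{s})\,ds\right]=\phi(x)-\mathbb E^{x}\!\left[\int_{0}^{\theta}e^{-\lambda s}G(\phi,X_{s})\,ds\right]<\phi(x)=v(x),
\]
where strictness uses $G(\phi,X_{s})>0$ for $s<\theta\le\theta_{\rho}$ and $\mathbb E^{x}[\theta]>0$. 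On the other hand, the strong Markov property (valid for $\{\mathcal F_{t}\}$ by the setup) yields the dynamic programming identity $v(x)=\mathbb E^{x}[\int_{0}^{\theta}e^{-\lambda s}\ell(X_{s})\,ds+e^{-\lambda\theta}R]$, where $R=v(X_{\theta})$ on $\{\theta_{\rho}<\zeta\}$ and $R=g(X_{\zeta})$ on $\{\theta_{\rho}\ge\zeta\}$. Subtracting, $v(x)$ minus the left-hand expectation above equals $\mathbb E^{x}[e^{-\lambda\theta}(R-\phi(X_{\theta}))]$, and I would show this is $\le 0$: on $\{\theta_{\rho}<\zeta\}$ one has $X_{\theta}\in\bar O$ and $\phi\ge\psi^{*}\ge v$, while on $\{\theta_{\rho}\ge\zeta\}$ the exit point satisfies $\phi(X_{\zeta})\ge g(X_{\zeta})$ (see the obstacle below). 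This forces the left-hand expectation $\ge v(x)$, contradicting the strict inequality, so $G(\phi,x)\le 0$. The supersolution inequality \eqref{eq:sup11} is entirely symmetric, using $\phi\le\psi_{*}$, the reverse ball inequality from $G(\phi,x)<0$, and $\phi(X_{\zeta})\le g(X_{\zeta})$ at exit.

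The hard part is the boundary behaviour of the exit point $X_{\zeta}$, which is what separates the finely interior points $\partial_{1}O$ from the naive interior argument. The crucial observation is that, because $\zeta=\tau_{\bar O}$ is the first time the path reaches the complement of the \emph{closed} set, right continuity forces $X_{\zeta}\in\overline{\bar O^{c}}$ on $\{\zeta<\infty\}$: there exist $t_{n}\downarrow\zeta$ with $X_{t_{n}}\in\bar O^{c}$ and $X_{t_{n}}\to X_{\zeta}$. Consequently $\psi^{*}(X_{\zeta})\ge\limsup_{z\to X_{\zeta},\,z\in\bar O^{c}}g(z)=g(X_{\zeta})$ by continuity of $g$, and since $\phi\ge\psi^{*}$ we obtain $\phi(X_{\zeta})\ge g(X_{\zeta})$ uniformly, whether the exit is by a jump into $\bar O^{c}$ or by a continuous crossing of $\partial O$; the symmetric bound $\phi(X_{\zeta})\le g(X_{\zeta})$ holds for $\phi\le\psi_{*}$. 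This is precisely the step that makes the same computation go through at boundary points $x\in\partial_{1}O$, where one cannot fit a ball inside $O$, and it is where the interplay between $\tau_{\bar O}$ and the envelopes of $\psi$ is essential; the cases $\zeta=\infty$ (where $e^{-\lambda\zeta}g(X_{\zeta})=0$ since $\lambda>0$) and the integrability needed for optional sampling are routine given $\lambda>0$ and the boundedness of $\phi,\mathcal L\phi,\ell,g$.
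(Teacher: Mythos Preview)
Your proof is correct and follows essentially the same approach as the paper: split boundary points via Blumenthal's zero--one law into regular points (where $v=g$ directly) and irregular points (where $\mathbb P^{x}(\zeta>0)=1$), and at the latter derive the viscosity inequalities by contradiction via the strong-Markov dynamic programming identity combined with Dynkin's formula on the test function. Your treatment is in fact a bit more careful than the paper's on one point: by splitting the DPP into the events $\{\theta_{\rho}<\zeta\}$ and $\{\theta_{\rho}\ge\zeta\}$ and invoking $X_{\zeta}\in\overline{\bar O^{c}}$ to compare $\phi(X_{\zeta})$ with $g(X_{\zeta})$ through the envelope, you make explicit the step that the paper's terser identity $v(x)=\mathbb E^{x}[e^{-\lambda h}v(X_{h})+\int_{0}^{h}\cdots]$ leaves implicit when $h=\zeta$ and $X_{\zeta}\in\partial O$.
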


\begin{proof} 
We dicuss the interior point and boundary point separately (recall that $G(\phi, x) = - \mathcal L \phi(x) + \lambda \phi(x) - \ell(x)$).

1. $v$'s interior viscosity solution property.
 First, fixing an arbitrary $x \in O$, we show $v$ satisfies the viscosity supersolution property, i.e.
\begin{equation}
 \label{eq:exi01}
 G(\phi, x)  \ge 0, \text{ for every } \phi \in
 J^{-}(v, x).
\end{equation}
To the contrary, assume
$G(\phi, x) < 0$ for some
$\phi \in J^{-}(v, x)$.
By the continuity of $x \mapsto G(\phi, x)$, for any $\epsilon >0$,
there exists $\delta >0$ such that
\begin{equation}
 \label{eq:exi21}
 \sup_{|y-x|< \delta} G(\phi, y) < - \epsilon/2.
\end{equation}
Since $X$ is a C\`adl\`ag process and $x\in O$, $\mathbb P^{x}\left(\zeta >0\right) = 1$.
By the strong Markov property of
$X$, we can rewrite the function $v$
as, for any stopping time $h \in (0, \zeta]$,
\begin{equation*}
 v(x) = \mathbb E^{x} \Big[ e^{- \lambda h} v (X_{h}) + \int_{0}^{h} e^{- \lambda s}\ell(X_{s})ds \Big],
\end{equation*}
which, with the fact of $\phi \in J^{-} (v, x)$, implies  that,
\begin{equation*}
 \phi(x) \ge \mathbb E^{x} \Big[ e^{- \lambda h} \phi(X_{h}) + \int_{0}^{h} e^{-\lambda s}\ell(X_{s})ds \Big].
\end{equation*}
Moreover, Dynkin's formula on $\phi$ gives
$$
\mathbb E^{x} [e^{-\lambda  h} \phi(X_{h})] =
\phi(x) +
\mathbb E^{x}
\Big [\int_{0}^{h}
e^{-\lambda  s} ( \mathcal L \phi(X_{s})  - \lambda \phi(X_{s})) ds
 \Big].
$$
Adding up the above two (in)equalities, it yields
$$
\mathbb E^{x}
\Big [\int_{0}^{h} e^{- \lambda s} G(\phi, X_{s}) ds \Big] \ge 0.
$$
Then, take $h = \inf\{t >0: X(t)  \notin \bar B_{\delta}(x)\} \wedge \zeta$, where\footnote{The argument $x$ is dropped in the rest of the paper if $x = 0$} $B_r(x)$ denotes the open ball with radius $r$ centered at $x$. Since 
$h> 0$ almost surely under $\mathbb P^{x}$, it leads to
a  contradiction to  \eqref{eq:exi21} and implies the supersolution property at $x$. The interior subsolution property can be similarly obtained.

2. $v$'s generalized boundary condition. For any $x\in \partial O$, by Blumenthal 0-1 law, either $\mathbb P^{x} (\zeta = 0) = 1$ or
$\mathbb P^{x} (\zeta >0) = 1$.
If $\mathbb P^{x} (\zeta = 0) = 1$, then $v(x) = g(x)$ by its definition \eqref{eq:fk01} and hence 
$\Gamma_{out} \supset \partial_{0} O$ holds.
On the other hand, if $\mathbb P^{x} (\zeta >0) = 1$, then we shall
examine its viscosity solution property.

For the viscosity supersolution property, assume \eqref{eq:exi21}
 holds for some $\phi \in J^{-}(v, x)$. Since $\mathbb P^{x} (\zeta >0) = 1$, we can follow exactly the same argument above for interior viscosity solution property to find a contradiction, which justifies the supersolution property. The subsolution property can be obtained in the similar way.
\end{proof}

\begin{remark}
From the definition of the generalized solution, $\Gamma_{out}$ can
be treated as part of solution.
Therefore, for characterization of unknown
$\Gamma_{out}$, it seems not satisfactory to have ``$\supset$''
instead of ``='' as its conclusion in Theorem \ref{t:main01} and Lemma \ref{l:gvis01}. However, it is indeed a full characterization by noting that the left hand side $\Gamma_{out}$ depends on the boundary value $g$, while the right hand side
$\partial_{0} O$ is invariant of $g$. More precisely, Appendix \ref{sec:go} shows that under some mild conditions,
$$\cap_{g\in C_{0}^{0,1}(\mathbb R^{d})} \Gamma_{out}[g] = \partial_{0} O.
$$
\end{remark}

The condition of $v$'s continuity up to the boundary in Lemma \ref{l:gvis01} may not be true in general.
The next is an example for  $v$ of \eqref{eq:fk01} being
discontinuous  even in the interior of the domain.

\begin{example}
\label{exm:15}
\end{example}
Consider a problem on two dimensional domain of 
\begin{equation}
 \label{eq:setting02}
O = (-1, 1) \times (0, 1), \
\mathcal L u(x) = \partial_{x_{1}} u(x) + 2 x_{1} \partial_{x_{2}} u(x), \
\lambda = 1 \text{ and }
\ell \equiv 1, g \equiv 0. 
\end{equation}
Then, PDE \eqref{eq:pde01} becomes
$$- \partial_{x_{1}} u(x) - 2 x_{1} \partial_{x_{2}} u(x) +
 u (x) - 1 = 0, \hbox{ on } O, \hbox{ and } u (x) = 0 \hbox{ on } O^{c}.$$
In fact, the process $X\sim \mathcal L$ with initial value $x = (x_{1}, x_{2})^{T}$
has the following deterministic parametric representation,
$$X_{1t} = x_{1} + t, \ X_{2t}= x_{2} - x_{1}^{2} + X_{1t}^{2}.$$
Therefore, the lifetime $\zeta$ is also a deterministic number depending on its initial state $x$, which will be denoted as $\zeta^{x}$:
$$\zeta^{x} = \begin{cases}
- x_{1} + \sqrt{1 - x_{2} + x_{1}^{2}}, &x \in O_{1} :=
\{x_{2} \ge x_{1}^{2}\} \cap \bar O,\\
1 - x_{1},& x \in O_{2} :=
\{x_{2} < x_{1}^{2}, x_{1}>0\} \cap \bar O,\\
- x_{1} - \sqrt{- x_{2} + x_{1}^{2}},& x \in O_{3} :=
\{x_{2} < x_{1}^{2}, x_{1}<0\} \cap \bar O.
\end{cases}$$
The mapping
$x\mapsto \zeta^{x}$ is discontinuous at every point on the curve
$\partial O_{1} \cap \partial O_{3}$, and so is $v$ of \eqref{eq:fk01}, which can be rewritten as
\begin{flalign*}
&&v(x) = \int_{0}^{\zeta^{x}} e^{-s}ds = 1 - e^{-\zeta^{x}}. &&
\end{flalign*}
In this example, 
$$
\begin{array}
 {ll}
 \partial_{0} O = & 
 \{ (x_{1}, x_{2}): x_{1} = 1, 0 \le x_{2} \le 1\} \cup 
 \\&
 \{ (x_{1}, x_{2}): x_{2} = 1, 0 \le x_{1} \le 1\} \cup 
 \\&
 \{ (x_{1}, x_{2}): x_{2} = 0, -1 \le x_{1} < 0\},
\end{array} $$
and $\partial_1 O = \partial O \setminus \partial_0 O$ is not open relative to $\partial O$. \hfill $\Box$

Example \ref{exm:15} together with Lemma \ref{l:gvis01} lead us to
investigate sufficient conditions for the continuity of the function $v$, and the next lemma shows that it depends on the continuity of $\zeta$ and $\Pi$.

\begin{definition}
For	a given function $\phi: \mathbb D^d \mapsto \mathbb R^m$ for some positive integer $m$, 
\begin{enumerate}
	\item $\phi$ is continuous at some $\omega \in \mathbb D^{d}$, if $\lim_{n\rightarrow\infty}\zeta(\omega_{n})  = \zeta(\omega), \hbox{ with } \lim_{n\rightarrow\infty}d_{o} (\omega_{n}, \omega) = 0.$
	\item Denote as $C_{\phi} = \{\omega\in \mathbb D^{d} : \phi \hbox{ is continuous at } \omega\}$ the continuity set of $\phi$. For a given probability $\mathbb Q$ on a $\sigma$-algebra of $\mathbb D^d$, $\phi$ is said to be continuous almost surely under $\mathbb Q$, if $\mathbb Q(C_{\phi}) = 1$.
\end{enumerate}
\end{definition}

Note that if $\phi$ is a Borel measurable mapping, then $C_{\phi}$ is a Borel set in $\mathbb D^{d}$. Example \ref{exm:15} implies that $C_{\zeta}$ can be a proper subset of
$\mathbb D^{d}$, and thus $\zeta$ may not be continuous everywhere. Lemma \ref{l:v01} below indicates that, for the continuity of $v$ at $x$, it suffices that the sets $C_{\zeta}$ and $C_{\Pi}$ are
big enough so that $\zeta$ and $\Pi$ are continuous almost surely under
$\mathbb P^x$, i.e. $\mathbb P^{x}(C_{\zeta} \cap C_{\Pi}) = 1$.

\begin{lemma}
 \label{l:v01} Let $x\in\bar O$.
 If $\zeta: \mathbb D^{d} \mapsto \mathbb R$ and $\Pi: \mathbb D^{d} \mapsto \mathbb R^{d}$ are continuous in Skorokhod topology almost surely under $\mathbb P^{x}$,
 then $v$ of \eqref{eq:fk01} is continuous at $x$
 relative to $\bar O$, i.e.
 $\lim_{\bar O \in y \to x} v(y) = v(x).$
\end{lemma}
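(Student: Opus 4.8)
The plan is to recognize the continuity of $v$ at $x$ as a consequence of the weak convergence of the starting-point laws $\mathbb P^y$ together with the continuous mapping theorem applied to the path functional $F$. Recall that $v(y) = \mathbb E^y[F]$ with
$$F(\omega) = \int_0^{\zeta(\omega)} e^{-\lambda s}\ell(\omega_s)\,ds + e^{-\lambda\zeta(\omega)}\, g\circ\Pi(\omega),$$
and note first that $F$ is bounded: since $g$ and $\ell$ vanish at infinity and are Lipschitz they are bounded, so $\|F\|_\infty \le \|\ell\|_\infty/\lambda + \|g\|_\infty < \infty$ using $\lambda>0$. Thus it suffices to establish two ingredients: (i) $\mathbb P^{y} \Rightarrow \mathbb P^x$ weakly on $(\mathbb D^d, d_o)$ as $\bar O \ni y \to x$, and (ii) $F$ is continuous $\mathbb P^x$-almost surely. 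Granting both, the extended continuous mapping theorem yields that $F$ under $\mathbb P^{y}$ converges in distribution to $F$ under $\mathbb P^x$; since $F$ is bounded, the expectations converge, i.e. $v(y) \to v(x)$.

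For ingredient (i), fix an arbitrary sequence $y_n \to x$ with $y_n \in \bar O$. The Feller property of the semigroup makes the map $z \mapsto \mathbb P^z$ weakly continuous from $\mathbb R^d$ into the space of probability measures on $\mathbb D^d$: the finite-dimensional distributions converge because $\mathbb E^z[\prod_i f_i(X_{t_i})]$ is built by iterating the Feller operators $P_t$, which preserve and act continuously on $C_0(\mathbb R^d)$, and tightness of $\{\mathbb P^{y_n}\}$ follows from the Feller property as well (see \cite{RY99, RW00}). Hence $\mathbb P^{y_n} \Rightarrow \mathbb P^x$.

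For ingredient (ii), I would show the inclusion $C_\zeta \cap C_\Pi \subseteq C_F$, after which the hypothesis $\mathbb P^x(C_\zeta\cap C_\Pi)=1$ gives $\mathbb P^x(C_F)=1$ (recall $C_F$ is Borel since $F$ is Borel measurable). Fix $\omega \in C_\zeta \cap C_\Pi$ and any $\omega_n \to \omega$ in $d_o$, so that $\zeta(\omega_n)\to\zeta(\omega)$ and $\Pi(\omega_n)\to\Pi(\omega)$. The boundary term $e^{-\lambda\zeta(\omega_n)}g(\Pi(\omega_n))$ converges to $e^{-\lambda\zeta(\omega)}g(\Pi(\omega))$ by continuity of $g$ and of $t\mapsto e^{-\lambda t}$ (when $\zeta(\omega)=\infty$ this term vanishes in the limit). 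For the running-cost term, when $\zeta(\omega)<\infty$ I split $\int_0^{\zeta(\omega_n)} = \int_0^{\zeta(\omega)} + \int_{\zeta(\omega)}^{\zeta(\omega_n)}$; the second piece tends to $0$ since $\zeta(\omega_n)\to\zeta(\omega)$ and the integrand is bounded by $\|\ell\|_\infty$, while the first converges by dominated convergence, using that $J_1$-convergence $\omega_n\to\omega$ forces $\omega_n(s)\to\omega(s)$ at every continuity point $s$ of $\omega$, hence at Lebesgue-a.e. $s$, together with the continuity and boundedness of $\ell$; the case $\zeta(\omega)=\infty$ is analogous, with the integrable majorant $\|\ell\|_\infty e^{-\lambda s}$ controlling the tail. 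This establishes $F(\omega_n)\to F(\omega)$, so $\omega\in C_F$.

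Finally, combining (i) and (ii) with the boundedness of $F$, the mapping theorem gives $\mathbb E^{y_n}[F]\to \mathbb E^x[F]$, i.e. $v(y_n)\to v(x)$; as $(y_n)$ was an arbitrary sequence in $\bar O$ converging to $x$, we conclude $\lim_{\bar O\ni y\to x} v(y)=v(x)$. The principal obstacles are the path-space weak convergence in (i), whose non-trivial content is the tightness of the family $\{\mathbb P^{y_n}\}$, and the analytic verification $C_\zeta\cap C_\Pi\subseteq C_F$ in (ii), where the moving integration limit $\zeta(\omega_n)$ and the subtleties of $J_1$ convergence of the integrand $s\mapsto\omega_n(s)$ must be handled with care.
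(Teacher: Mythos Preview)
Your proposal is correct and follows essentially the same strategy as the paper: weak convergence $\mathbb P^{y}\Rightarrow\mathbb P^{x}$ from the Feller property, the continuous mapping theorem applied to the bounded functional $F$, and the inclusion $C_\zeta\cap C_\Pi\subseteq C_F$ established by treating the terminal term directly and the running-cost term via Skorokhod convergence $\Rightarrow$ a.e.\ pointwise convergence plus dominated convergence. The only cosmetic differences are that the paper cites Kallenberg for the weak convergence and uses the Lipschitz constant of $\ell$ explicitly (bounding $|F_1(\omega_n)-F_1(\omega)|$ by $K\int_0^{\zeta(\omega)\wedge\zeta(\omega_n)}e^{-\lambda s}|\omega_n(s)-\omega(s)|\,ds+K|\zeta(\omega_n)-\zeta(\omega)|$) rather than your split $\int_0^{\zeta(\omega_n)}=\int_0^{\zeta(\omega)}+\int_{\zeta(\omega)}^{\zeta(\omega_n)}$; both decompositions lead to the same conclusion.
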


\begin{proof}
If $\bar O \ni y \to x$,
then $\mathbb P^{y}$ converges to $\mathbb P^{x}$ weakly 
by Theorem 17.25 of \cite{Kal02}.
By the continuous mapping theorem (Theorem 2.7 of \cite{Bil99}), together with
uniform boundedness of $F$,
$v$ is continuous at $x$ if $F$ is continuous almost surely under $\mathbb P^{x}$,
i.e. $\mathbb P^{x} (C_{F}) = 1$ for the continuity set $C_{F}$ of
$F: \mathbb D^{d} \mapsto \mathbb R$.
Then it suffices to show that $C_{\zeta} \cap C_{\Pi} \subset C_{F}$.

Rewrite $F$ as $F = F_{1} + F_{2}$, where
$$F_{1}(\omega) = \int_{0}^{\zeta(\omega)} e^{-\lambda s} \ell(\omega_{s}) ds,
\quad F_{2}(\omega) = e^{-\lambda \zeta(\omega)} g \circ \Pi(\omega).$$
It is straight forward to see that $F_2$ is continuous at a given $\omega$
if $\zeta$ and $\Pi$ are continuous at the same $\omega$.
For $F_1$, consider an arbitrary sequence $\omega_{n} \to \omega$ in Skorokhod metric, and $\zeta$ and $\Pi$ are continuous at $\omega$. $|F_{1}(\omega_{n}) - F_{1}(\omega)|$ can be approximated by
\begin{equation}
 \label{eq:F02}
\begin{array}
 {ll}
  |F_{1}(\omega_{n}) - F_{1}(\omega)| 
  & \displaystyle
  \le 
  K \int_{0}^{\zeta(\omega)\wedge
\zeta(\omega_{n})}
e^{-\lambda s}
|\omega_{n}(s) - \omega(s)| ds +
K |\zeta(\omega_{n}) - \zeta(\omega)|,\\
& \displaystyle
 \le 
  K \int_{0}^{\infty}
e^{-\lambda s}
|\omega_{n}(s) - \omega(s)| 
I_{(0, \zeta(\omega) \wedge \zeta(\omega_{n}))}(s) ds +
K |\zeta(\omega_{n}) - \zeta(\omega)|,\\
& \displaystyle
:= K \cdot Term1_{n} + K \cdot Term2_{n},
\end{array}
\end{equation}
where $K = \max_{x\neq y}  |\frac{\ell(x) - \ell(y)}{x - y}| + 
\max_{\bar O} |\ell(x)|$ is a constant independent to $n$, 
$Term1_{n} = \int_{0}^{\infty}
e^{-\lambda s}
|\omega_{n}(s) - \omega(s)| 
I_{(0, \zeta(\omega) \wedge \zeta(\omega_{n}))}(s) ds$ and $Term2_{n} = |\zeta(\omega_{n}) - \zeta(\omega)|$.
Observe that
\begin{itemize}
\item $\omega_{n} \to \omega$ in Skorokhod metric implies that $\zeta(\omega_{n}) \to \zeta(\omega)$ due to the continuity of $\zeta$.
Therefore, $Term2_{n}$ goes to zero as $n$ goes to infinity.
 \item $\omega_{n} \to \omega$ in Skorokhod metric implies that
 $\omega_{n}(t) \to \omega(t)$ holds for all $t\in C_{\omega}$, where
$C_{\omega}$ is the continuity set of the function $\omega: [0, \infty) \mapsto \mathbb R^{d}$ (see Page 124 of \cite{Bil99}). 
Since there are countably many discontinuities of 
the mapping $\omega: t \mapsto \mathbb R^{d}$ for any 
C\`adl\`ag path $\omega$,
$$\lim_{n\to \infty} |\omega_{n}(t) - \omega(t)| = 0$$ 
almost everywhere in Lebesgue measure.
Therefore, the integrand in $Term1_{n}$
$$|\omega_{n}(s) - \omega(s)| 
I_{(0, \zeta(\omega) \wedge \zeta(\omega_{n}))}(s)$$
converges to zero as $n\to \infty$
for almost every $s$ in Lebesgue measure. Together with its uniform boundedness by $2 \max_{\bar O} |x|$, the
Dominated Convergence Theorem implies that 
$Term1_{n}$ converges to zero as $n$ goes to infinity.

\end{itemize}
Hence, each term of the right hand side of \eqref{eq:F02}
goes to zero and is uniformly bounded. Therefore,
the limit of $|F_{1}(\omega_{n}) - F_{1}(\omega)|$ is also zero and
$F$ is continuous at $\omega$.
\end{proof}

\subsection{Sufficient conditions - II}
Lemmas \ref{l:gvis01} and
\ref{l:v01} lead us
to investigate the continuity of $\zeta$ and $\Pi$, which
is not always the case, as illustrated by Example \ref{exm:15}.
The main results of this section in Proposition \ref{p:con02}  indicates that, for $x\notin \partial_1 O$, $\zeta$ and $\Pi$ are continuous under $P^x$, i.e.  
$v$ is a generalized solution of \eqref{eq:pde01}, if the following condition holds ($x\in \partial_1 O$ and the proof of Theorem \ref{t:main01} are discussed in Section \ref{sec:iii})

(C): $X$ exits from $O$ and $\bar O$ at the same time almost surely;
 
Note that condition (C) is violated in Example \ref{exm:15}: for $(x_1, x_2) \in \bar O_1\cap \bar O_3$, 
$x_2 = x_1^2$ and $x_1 <0$. Thus $\zeta$ (the exit time of $\bar O$) is $-x_1 + \sqrt{1-x_2 + x_1^2}$, while $\hat\zeta$ (the exit time of $O$) is $-x_1 + \sqrt{-x_2 + x_1^2}$.

To proceed, we introduce the following notions.
For a path $\omega \in \mathbb D^{d}$, denote
$\omega^{-}$ as a C\`agl\`ad version of $\omega$:
$$\omega^{-}_{0} = \omega_{0}, \hbox{ and }
\omega_{t}^{-} = \lim_{s\uparrow t^{-}} \omega_{s} \hbox{ for } t >0,$$
and the associated exit time operator:
\begin{equation}
 \label{eq:taum}
\tau^{-}_{B} (\omega) = \inf\{t > 0, \omega^{-}_{t} \notin B\}.
\end{equation}

 If $\omega$ is continuous, then $\omega = \omega^{-}$ and
 $\tau_{B}(\omega) = \tau^{-}_{B}(\omega)$.
 However, we shall not casually 
 expect an equality or even
 an inequality between $\tau_{B}$ and $\tau_{B}^{-}$
in general, as demonstrated in the following example.
\begin{example}
\label{exm:14}
 Let $B = (0, 3)$ and a C\`adl\`ag
 path $\omega_{t} = |t - 1| + I_{[0, 1)}(t)$.
 $$\tau_{B}(\omega) = 1 < \tau_{B}^{-}(\omega) = 4.$$
 On the other hand, for another C\`adl\`ag path $\omega_{t} = 1 - t I_{[0, 1)}(t)$,
 \begin{flalign*}
 &&\tau_{B}(\omega) = \infty > \tau^{-}_{B}(\omega) = 1. &&\Box
 \end{flalign*}
\end{example}

To discuss the continuity of the lifetime $\zeta$, define
\begin{equation}
 \label{eq:zeta02}
\zeta^{-} (\omega) = \tau_{O}^{-}(\omega).
\end{equation}
By definition, the following inequality holds,
\begin{equation}
 \label{eq:zeta03}
 \max\{\hat \zeta(\omega), \zeta^{-}(\omega)\} \le \zeta(\omega),
\ \forall \omega \in \mathbb D^{d}.
\end{equation}
Furthermore, though Example \ref{exm:14} shows that
neither
$\hat \zeta \ge \zeta^{-}$ nor
$\hat \zeta \le \zeta^{-}$ is generally true.
Interestingly, for the C\`adl\`ag Feller process $X$,
the inequality
$\zeta^{-} \ge \hat \zeta$ holds almost surely under $\mathbb P^x$.

\begin{proposition}
 \label{p:zeta01} For any $x\in\bar O$, the following identities hold:
 $$\mathbb P^{x} \left(\omega^{-}(\zeta^{-}) \in \partial O,
 \omega^{-}(\zeta^{-}) \neq \omega(\zeta^{-})\right) = 0, \text{ and }\
 \mathbb P^{x} \left( \hat \zeta \le \zeta^{-} \le \zeta \right) = 1.$$
\end{proposition}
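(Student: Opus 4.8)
The plan is to establish the two assertions in turn, reducing the second to the first. The right-hand bound $\zeta^{-}\le\zeta$ requires no probability at all: it is exactly the deterministic inequality \eqref{eq:zeta03}. The genuine content is therefore the first identity, namely that the process almost surely does not jump at the left-limit exit time $\zeta^{-}$ while its left limit sits on $\partial O$; I will then deduce $\hat\zeta\le\zeta^{-}$ from it by a pathwise argument. The driving tool will be the quasi-left-continuity of the Feller process $X$ (no jumps at announceable times), which is where the probabilistic input enters.

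\textbf{Reduction of $\hat\zeta\le\zeta^{-}$ to the first identity.} First I would prove the deterministic inclusion
$$\{\hat\zeta>\zeta^{-}\}\subset A:=\{\omega^{-}(\zeta^{-})\in\partial O,\ \omega^{-}(\zeta^{-})\neq\omega(\zeta^{-})\}.$$
Indeed, on $\{\hat\zeta>\zeta^{-}\}$ one has $\omega_{t}\in O$ for every $t\le\zeta^{-}$, so $\omega(\zeta^{-})\in O$ and $\omega^{-}(\zeta^{-})=\lim_{s\uparrow\zeta^{-}}\omega_{s}\in\bar O$, while $\omega^{-}_{s}\in O$ for $s<\zeta^{-}$ by the definition of $\zeta^{-}$. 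A short right-continuity argument shows $\omega^{-}(\zeta^{-})\in O^{c}$ (otherwise the infimum defining $\zeta^{-}$ would be approached strictly from the right, forcing $\omega(\zeta^{-})\in O^{c}$ and contradicting $\omega(\zeta^{-})\in O$). Hence $\omega^{-}(\zeta^{-})\in\bar O\cap O^{c}=\partial O$, and as $\omega(\zeta^{-})\in O$ it differs from $\omega^{-}(\zeta^{-})$, placing the path in $A$. Granting $\mathbb P^{x}(A)=0$, this gives $\mathbb P^{x}(\hat\zeta>\zeta^{-})=0$, and combining with $\zeta^{-}\le\zeta$ yields $\mathbb P^{x}(\hat\zeta\le\zeta^{-}\le\zeta)=1$.

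\textbf{Proof of the first identity.} Note $\zeta^{-}\in(0,\infty)$ on $A$. I would fix an open exhaustion $O_{k}\uparrow O$ with $\overline{O_{k}}\subset O$, say $O_{k}=\{x\in O:\ d(x,O^{c})>1/k\}$, and set $T_{k}=\inf\{t>0:\ \omega^{-}_{t}\notin O_{k}\}$. Since $t\mapsto\omega^{-}_{t}$ is left-continuous and hence predictable, each $T_{k}$ is an $\{\mathcal F_{t}\}$-stopping time, and $O_{k}\subset O$ gives $T_{k}\le\zeta^{-}$ with $T_{k}\uparrow$. The key geometric step is that on $A$ the $T_{k}$ increase strictly to $\zeta^{-}$: strictness holds because $\omega^{-}_{s}\to\omega^{-}(\zeta^{-})\in\partial O$ as $s\uparrow\zeta^{-}$ while $\partial O\cap\overline{O_{k}}=\emptyset$, so $\omega^{-}_{s}\notin O_{k}$ for some $s<\zeta^{-}$; and if $T_{\infty}:=\lim_{k}T_{k}<\zeta^{-}$ then $\omega^{-}(T_{\infty})\in O$, whereas $d(\omega^{-}(T_{k}),O^{c})=1/k\to0$ together with left-continuity forces $\omega^{-}(T_{\infty})\in O^{c}$, a contradiction. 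Thus $\{T_{k}\}$ announces $\zeta^{-}$ on $A$. Now by quasi-left-continuity (see, e.g., \cite{RY99}), $\omega(T_{k})\to\omega(T_{\infty})$ almost surely on $\{T_{\infty}<\infty\}$; on $A$ the pathwise left limit gives $\omega(T_{k})\to\omega^{-}(\zeta^{-})$, so $\omega^{-}(\zeta^{-})=\omega(\zeta^{-})$ almost surely on $A$, contradicting the defining jump of $A$. Hence $\mathbb P^{x}(A)=0$.

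\textbf{Main obstacle.} I expect the crux to be the geometric claim that $T_{k}\uparrow\zeta^{-}$ strictly on $A$, i.e. that $\zeta^{-}$ is predictable there, along with the bookkeeping needed to apply quasi-left-continuity: that the left-limit exit times $T_{k}$ are genuine stopping times for the augmented filtration, and that $\lim_{k}\omega(T_{k})$ is exactly the left limit $\omega^{-}(\zeta^{-})$. The remaining pathwise facts (attainment of the left-limit exit and the inclusion of the reduction step) are elementary case checks on c\`adl\`ag paths and should not pose real difficulty.
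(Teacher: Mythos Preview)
Your overall plan coincides with the paper's: both arguments rest on the fact that a Feller process cannot jump at a predictable time. The paper packages this as ``$\zeta_1^{-}$ is predictable, while jump times are totally inaccessible (Meyer's theorem)'', whereas you try to realise the predictability by an explicit announcing sequence and then invoke quasi-left-continuity. Your reduction of $\hat\zeta\le\zeta^{-}$ to the first identity is correct and is essentially the paper's case analysis read in contrapositive.

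The gap is in your announcing sequence. The claim $d(\omega^{-}(T_k),O^{c})=1/k$ is false in general: left-continuity only yields $\omega^{-}(T_k)\in\overline{O_k}$, i.e.\ $d(\omega^{-}(T_k),O^{c})\ge 1/k$, the wrong direction. The infimum $T_k$ may be attained from the right, in which case it is the right limit $\omega(T_k)$, not $\omega^{-}(T_k)$, that sits near $\partial O$. A concrete c\`adl\`ag path shows the construction can fail on $A$: take $O=(0,2)$, $\omega_t=1$ on $[0,1)$, $\omega_t=2(t-1)$ on $[1,2)$, and $\omega_t=1$ for $t\ge 2$. Then $\zeta^{-}=2$, $\omega^{-}(\zeta^{-})=2\in\partial O$, $\omega(\zeta^{-})=1$, so $\omega\in A$; but for every $k\ge 2$ one has $T_k=1$, since $\omega^{-}_t=2(t-1)<1/k$ for $t$ just above $1$ while $\omega^{-}_t=1\in O_k$ on $(0,1]$. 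Hence $T_\infty=1<\zeta^{-}$, and your $\{T_k\}$ does not announce $\zeta^{-}$.

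The paper avoids this by not building an explicit sequence: it simply observes that $\zeta_1^{-}$ (equal to $\zeta^{-}$ on $\{\omega^{-}(\zeta^{-})\in\partial O\}$, else $+\infty$) is predictable, being the hitting time of the closed set $O^{c}$ by the left-continuous process $\omega^{-}$ on the event where the hit is attained, and then appeals to the disjointness of predictable and totally inaccessible times. You correctly flagged the announcement step as the crux; to repair your argument you should invoke this general predictability fact directly rather than the exhaustion $O_k$.
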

\begin{proof}

Let 
 $\zeta^{-}_{1} (\omega) = \zeta^{-} (\omega)$ if $\omega^{-}(\zeta^{-}) \in \partial O$, and infinity otherwise.  Then, $\zeta_{1}^{-}$
 is $\mathcal F_{t-}$-stopping time, and hence
 a predictable stopping time.

  If $\omega$ is discontinuous
 at $\zeta^{-}$, then $\zeta^{-}$ is a
 totally inaccessible stopping time due to the jump by
 Meyer's theorem (see Theorem III.4 of \cite{Pro90}).
 According to Theorem III.3 of \cite{Pro90}, the set of
 predictable stopping times
 has no overlap with the set of totally inaccessible stopping times
 almost surely. Hence, $\mathbb P^{x} \left(\omega^{-}(\zeta_{1}^{-}) \neq \omega(\zeta_{1}^{-}); \zeta_{1}(\omega) < \infty \right) = 0$, which is equivalent to
 $\mathbb P^{x} \left(\omega^{-}(\zeta^{-}) \in \partial O,
 \omega^{-}(\zeta^{-}) \neq \omega(\zeta^{-})\right) = 0.$

Thus whenever $\omega^{-}(\zeta^{-}) \in \partial O$, 
 $\omega(\zeta^{-}) = \omega^{-}(\zeta^{-}) \in \partial O$ almost surely, and therefore
 $\hat \zeta \le \zeta^{-}$ by definition, i.e.
 \footnote{
 To avoid ambiguity, let $\mathbb P(A|B) = 1$ whenever
 $\mathbb P(B) = 0$.}
 $$
 \mathbb P^{x} \left(\hat \zeta \le \zeta^{-} | \omega^{-}(\zeta^{-}) \in \partial O\right) = 1.$$

 On the other hand, if $\omega^{-}(\zeta^{-}) \notin \partial O$, 
then by the left-continuity of $\omega^{-}$, $\omega^{-}(\zeta^{-}) \in  O$. In this case, there must be a jump at $\zeta^-$, and
 there exists a sequence $t_{n} \downarrow \zeta^{-}$ as $n$ goes to infinity, such that
 $\omega^{-} (t_{n}) \in O^c$ for every $n$. By the right continuity of $\omega$, $\omega(\zeta^{-}) = \lim\limits_{n\to \infty} \omega^{-} (t_{n}) \in O^{c}$ due to the closedness of $O^{c}$. Hence, $\hat \zeta \le \zeta^{-}$ whenever $\omega^{-}(\zeta^{-}) \in O$, and therefore
 $$
 \mathbb P^{x} \left(\hat \zeta \le \zeta^{-} | \omega^{-}(\zeta^{-}) \in O\right) = 1.$$
Since
 $\mathbb P^{x} \left(\{\omega^{-}(\zeta^{-}) \in \partial O\}
 \cup \{\omega^{-}(\zeta^{-}) \in  O\}\right) =1$,
 $\mathbb P^{x} \left(\hat \zeta \le \zeta^{-}\right) =1$. The other inequality
 $\zeta^{-} \le \zeta$ holds by the definition.
\end{proof}

Next, we establish the almost sure continuity of $\zeta$ and
$\Pi$ if it starts from $x \notin \partial_{1} O$.

\begin{proposition}
 \label{p:con02}
 If $x \in \mathbb R^{d} \setminus \partial_{1} O$ and $\mathbb P^{x}(\hat \zeta = \zeta) =1$, then
 both $\zeta$ and $\Pi$ are almost surely continuous under $\mathbb P^{x}$.
\end{proposition}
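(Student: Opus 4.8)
The plan is to pass to a single full-measure event and then argue pathwise. First I would combine the hypothesis $\mathbb P^x(\hat\zeta=\zeta)=1$ with the chain $\hat\zeta\le\zeta^-\le\zeta$ of Proposition \ref{p:zeta01} to get $\mathbb P^x\big(\hat\zeta=\zeta^-=\zeta\big)=1$; call this event $A$. Since $C_\zeta$ and $C_\Pi$ are Borel, it suffices to prove $A\subset C_\zeta\cap C_\Pi$. Paths with $\zeta(\omega)=0$ are immediate (there $\zeta$ is nonnegative and upper semicontinuous, so $\zeta(\omega_n)\to0$, and $\Pi(\omega)=\omega(0)$ varies continuously at $t=0$), so I fix $\omega\in A$ with $\zeta(\omega)>0$, an arbitrary $\omega_n\to\omega$ in $J_1$, and time-changes $\lambda_n\to\mathrm{id}$ with $\omega_n\circ\lambda_n\to\omega$ locally uniformly. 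The one structural fact I would isolate at the outset is that on $A$ the trajectory avoids $\partial O$ before exiting: for every $t<\zeta(\omega)$, $\hat\zeta=\zeta$ gives $\omega(t)\in O$ and $\zeta^-=\zeta$ gives $\omega(t^-)\in O$. As a c\`adl\`ag path together with its left limits has compact range on a compact interval, $\overline{\{\omega(t):t\in[0,s]\}\cup\{\omega(t^-):t\in(0,s]\}}$ is, for each $s<\zeta(\omega)$, a compact subset of the \emph{open} set $O$, hence at distance $\ge\delta>0$ from $\bar O^c$.

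For $\zeta$ I would prove the two semicontinuities separately. Upper semicontinuity holds at every path and needs no hypothesis: $\zeta$ is the hitting time of the open set $\bar O^c$, so for a continuity point $t_1>\zeta(\omega)$ with $\omega(t_1)\in\bar O^c$ one has $\omega_n(\lambda_n(t_1))\in\bar O^c$ eventually, whence $\zeta(\omega_n)\le\lambda_n(t_1)\to t_1$; letting $t_1\downarrow\zeta(\omega)$ gives $\limsup_n\zeta(\omega_n)\le\zeta(\omega)$. Lower semicontinuity is exactly where the structural fact is used: since $\omega$ stays at distance $\ge\delta$ from $\bar O^c$ throughout $[0,s]$, the locally uniform closeness of $\omega_n\circ\lambda_n$ to $\omega$ keeps $\omega_n$ inside $\bar O$ up to time $\lambda_n(s)$, so $\zeta(\omega_n)\ge\lambda_n(s)\to s$; letting $s\uparrow\zeta(\omega)$ yields $\liminf_n\zeta(\omega_n)\ge\zeta(\omega)$. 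Combining the two gives $\zeta(\omega_n)\to\zeta(\omega)$, i.e. $\omega\in C_\zeta$.

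For $\Pi(\omega)=\omega(\zeta(\omega))$ I would split by exit mode, invoking Proposition \ref{p:zeta01} on $A$ (where $\zeta^-=\zeta$): either (i) $\omega(\zeta^-)\in\partial O$, whence that proposition forces $\omega(\zeta)=\omega(\zeta^-)$, so $\omega$ is continuous at $\zeta$; or (ii) $\omega(\zeta^-)\in O$, in which case $\omega$ must jump at $\zeta$ from the interior straight into $\bar O^c$. In case (i), continuity of the evaluation map at a continuity point, together with the already proved $\zeta(\omega_n)\to\zeta(\omega)$, gives $\omega_n(\zeta(\omega_n))\to\omega(\zeta(\omega))$. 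In case (ii), the compact-containment fact keeps $\omega_n$ inside $\bar O$ until the jump of $\omega_n$ matching that of $\omega$ near time $\zeta(\omega)$; the exit of $\omega_n$ therefore occurs precisely at this jump, and the post-jump values converge, so $\Pi(\omega_n)=\omega_n(\zeta(\omega_n))\to\omega(\zeta(\omega))=\Pi(\omega)$. In both cases $\omega\in C_\Pi$, completing $A\subset C_\zeta\cap C_\Pi$.

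The main obstacle is the jump-exit case (ii) and its Skorokhod bookkeeping: one must check, via the time-changes $\lambda_n$, that the pre-jump values $\omega_n(\lambda_n(\zeta)^-)\to\omega(\zeta^-)\in O$ keep $\omega_n$ strictly inside up to the matched jump, while the post-jump values $\omega_n(\lambda_n(\zeta))\to\omega(\zeta)\in\bar O^c$ place it outside, so that no spurious earlier exit is created and the exit value is read off at the correct instant. This is exactly where both inputs are indispensable: $\hat\zeta=\zeta$ places the pre-exit path in the open $O$, so perturbations stay inside, and Proposition \ref{p:zeta01} rules out a discontinuous exit originating on $\partial O$ — the only configuration that could desynchronize the exit times of the $\omega_n$ from the jump and thereby break continuity of $\Pi$.
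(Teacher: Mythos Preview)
Your pathwise inclusion $A\subset C_\zeta\cap C_\Pi$ is false, and the counterexample is already in the paper. Take $O=(0,1)$ and $\omega_0(t)=t$ as in Example~\ref{e:05}: then $\hat\zeta(\omega_0)=\zeta^-(\omega_0)=\zeta(\omega_0)=1$, so $\omega_0\in A$, yet the sequence $\omega_n$ constructed there satisfies $\omega_n\to\omega_0$ in $J_1$ while $\zeta(\omega_n)\to 0\neq 1$. The failure occurs exactly in your ``structural fact'': the implication $\hat\zeta=\zeta\Rightarrow\omega(t)\in O$ holds only for $t\in(0,\zeta(\omega))$, not for $t=0$, because $\hat\zeta$ is defined with strict inequality $t>0$. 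In the example $\omega_0(0)=0\in\partial O$, so the set $\{\omega_0(t):t\in[0,s]\}$ touches $\partial O$, its closure is not a compact subset of the open $O$, and your lower-semicontinuity argument collapses. This is precisely the point at which the hypothesis $x\notin\partial_1 O$ must be invoked, and you never use it.

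The paper's proof handles this by splitting on the location of $x$ rather than on $\{\zeta=0\}$ versus $\{\zeta>0\}$: for $x\in O$ one works on $\{\omega(0)\in O\}\cap\Gamma_1\setminus\Gamma_2$ (your Skorokhod argument then goes through verbatim, since $\omega(0)\in O$ restores the compact-containment step); for $x\in\partial_0 O$ one has $\mathbb P^x(\zeta=0)=1$ and your $\zeta=0$ argument applies; for $x\in\bar O^c$ the result is trivial. A secondary but related slip: in your case~(i) for $\Pi$ you treat ``$\omega^-(\zeta^-)\in\partial O\Rightarrow\omega$ continuous at $\zeta$'' as a deterministic implication, but Proposition~\ref{p:zeta01} only gives this $\mathbb P^x$-a.s.; the exceptional set $\Gamma_2=\{\omega^-(\zeta^-)\in\partial O,\ \omega^-(\zeta^-)\neq\omega(\zeta^-)\}$ must be removed from your full-measure event as well.
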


\begin{proof}
 If $x \in \bar O^c$, then any $\omega$ with its initial state $x$
 satisfies $\zeta(\omega) \equiv 0$ and $\Pi(\omega) \equiv x$ being constant mappings. $\zeta$ and $\Pi$ are both continuous at $\omega$ with its initial $x \in \bar O^c$.

If $x \in O$.
 A slight modification of the proof of Theorem 3.1 and Proposition 2.4 of \cite{BS18} implies that,
the mappings $\zeta: \mathbb D^{d} \mapsto \mathbb R$ and
$\Pi: \mathbb D^{d} \mapsto \mathbb R^{d}$
are both continuous at any
$$\omega \in \Gamma := \{\omega: \omega(0) \in O\} \cap
\Gamma_{1} \setminus \Gamma_{2}$$ in Skorokhod topology,
where
 $\Gamma_{1} = \{\omega: \zeta^{-} = \hat \zeta =  \zeta\}$
 and $\Gamma_{2} = \{\omega: \omega^{-}(\zeta^{-}) \in \partial O,
 \omega^{-}(\zeta^{-}) \neq  \omega(\zeta^{-})\}$.
Proposition \ref{p:zeta01} and the condition $\mathbb P^{x}(\hat \zeta = \zeta) =1$ imply that $\mathbb P^{x}(\Gamma_{1}) = 1$
and $\mathbb P^{x}(\Gamma_{2}) = 0$. Therefore, $\mathbb P^{x}(\Gamma) = 1$ and we conclude almost sure continuity of
$\zeta$ and $\Pi$ for this case.

Finally, if $x \in \partial O_0 $, using exactly
 the same approach of Theorem 3.1 and Proposition 2.4 of \cite{BS18},
 we know that the mappings $\zeta: \mathbb D^{d} \mapsto \mathbb R$ and
$\Pi: \mathbb D^{d} \mapsto \mathbb R^{d}$
are both continuous at any
$$\omega \in \Gamma := \{\omega: \omega(0) \in \partial O, \hat \zeta = \zeta = 0\} $$ in Skorokhod topology. Since  $x$ is regular for $\bar O^{c}$, $\mathbb P^{x}(\Gamma) = 1$ and we conclude almost sure continuity of
$\zeta$ and $\Pi$ for this case.
\end{proof}

\subsection{Sufficient conditions - III} \label{sec:iii}
For $x\in \partial_1 O$, the following example shows that $\mathbb P^{x}(\hat \zeta = \zeta) =1$ does not guarantee the almost sure continuity of $\zeta$ and $\Pi$ under $\mathbb P^x$.
\begin{example} \label{e:05}
Consider
$O = (0, 1)$ and $X_t = t$. In other words, $\mathbb P^{0}(\omega_{0}) = 1$ for $\omega_{0}(t) = t$.
Then, $0 \in \partial_1 O$ and $\mathbb P^{0}(\hat \zeta = \zeta =1) =1$. In particular, recall from the definition of $\hat\zeta$ in \eqref{eq:zeta00}
that $\hat \zeta = 1$ instead of $\hat \zeta = 0$, because it is defined as
hitting time $\inf\{t>0: ...\}$ instead of entrance time $\inf\{t\ge 0: ...\}$.
However, the sequence of paths $\left\{\omega_n\right\}_{n\geq 1}$ with
$$\omega_{n}(t)= \left\{
\begin{array}{ll}
 n^{-1} - 2 t, & t\in [0, n^{-1}),\\
 t, & t \ge n^{-1},
\end{array}\right.
$$
satisfies $\lim_{n\to \infty}d_{o}(\omega_{n}, \omega_{0}) = 0$, while $\lim_{n\to \infty}\zeta(\omega_{n}) \to 0 \neq \zeta(\omega_{0})$, as well as
$\mathbb P^{0}\{\omega_{0}\} >0$.  Hence, $\zeta$ can not be continuous almost surely in $\mathbb P^{0}$.
\hfill $\Box$
\end{example}

\begin{remark}
	Example \ref{e:05} implies that for $x\in \partial_{1} O$, $\zeta$ is not almost surely continuous under $\mathbb P^{x}$, and we need to pursue other sufficient conditions for the continuity of $v$. In the following discussion, the idea is to consider a larger domain $O_{1}\supset O$ to which  Proposition \ref{p:con02} applies, and we assume on the regularity structure of $O$, which guarantees that
	$\mathbb P^{x}(\zeta = \tau_{\bar O_1}) = 1$  and hence $v(x) = v_{1}(x)$
	for all $x\in \bar O$, and it turns out that this is the only assumption in addition to Assumption \ref{assumption} for the main theorem to hold.
\end{remark}

As a preparation, define the shift operator
$\theta_{t}: \mathbb D^{d} \mapsto \mathbb D^{d}$
as
$$
\theta_{t} \omega (s) = \omega(t + s), \forall s\ge 0.$$
This implies that $(X_{s} \circ \theta_{t})(\omega) = X_{s} (\theta_{t}\omega) =
\theta_{t} \omega (s) = \omega(t + s) = X_{t+s}(\omega)$.

\begin{proposition}
 \label{p:zeta02}
 If $h\in [0, \hat \zeta(\omega)]$,
then $\zeta \circ \theta_{h}(\omega) = \zeta(\omega) - h$
for all $\omega \in \mathbb D^{d}$.
\end{proposition}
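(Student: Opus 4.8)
The plan is to reduce the statement to an elementary fact about infima of exit-time sets by unwinding the definitions. Using the shift identity $X_{s}\circ\theta_{h} = X_{s+h}$ recorded just above the proposition, one has $\zeta(\theta_{h}\omega) = \inf\{t>0 : \theta_{h}\omega(t)\notin\bar O\} = \inf\{t>0 : \omega_{t+h}\notin\bar O\}$. Substituting $s = t+h$ turns this into $\zeta(\theta_{h}\omega) = \big(\inf\{s>h : \omega_{s}\notin\bar O\}\big) - h$. Hence the proposition is equivalent to the identity $\inf\{s>h : \omega_{s}\notin\bar O\} = \inf\{s>0 : \omega_{s}\notin\bar O\} = \zeta(\omega)$, which I would establish by showing that the exit set $E := \{s>0 : \omega_{s}\notin\bar O\}$ is essentially unaffected by discarding the times in $(0,h]$.

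Second, I would record the key inclusion coming from $O\subseteq\bar O$: if $\omega_{s}\notin\bar O$ then $\omega_{s}\notin O$, so every element of $E$ lies in $\{s>0 : \omega_{s}\notin O\}$ and is therefore at least $\hat\zeta(\omega)$. Since by hypothesis $h\le\hat\zeta(\omega)$, this gives $E\subseteq[\hat\zeta(\omega),\infty)\subseteq[h,\infty)$. Consequently, passing from $\{s>0\}$ to $\{s>h\}$ can delete at most the single time $s=h$ from $E$, so the two infima can differ only through the behavior of $E$ at the left endpoint $h$.

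Third, I would dispose of the endpoint. If $h<\hat\zeta(\omega)$, or if $h\notin E$, then $E\cap(h,\infty)$ has the same infimum as $E$, and the identity is immediate. The only delicate case is $h=\hat\zeta(\omega)$ together with $\omega_{h}\notin\bar O$; here $\zeta(\omega)=h$, and one must check that removing the point $h$ does not raise the infimum. For this I would invoke right-continuity of the c\`adl\`ag path $\omega$: since $\bar O^{c}$ is open and $\omega_{h}\in\bar O^{c}$, right-continuity forces $\omega_{s}\in\bar O^{c}$ for all $s$ in some interval $[h,h+\delta)$, so $h$ is an accumulation point of $E$ from the right and $\inf\{s>h : \omega_{s}\notin\bar O\}=h=\zeta(\omega)$ still holds. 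The degenerate case $E=\emptyset$ (the path never leaves $\bar O$) is consistent as well, both sides being $+\infty$.

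The main obstacle is precisely this boundary case $h=\hat\zeta(\omega)$, where the path may already sit outside $\bar O$ at time $h$: one cannot argue purely set-theoretically, and the resolution rests on combining the openness of $\bar O^{c}$ with the right-continuity of $\omega$ to guarantee that the exit set accumulates at $h$ from the right. Everything else is a routine manipulation of the shift together with the inclusions among $O$, $\bar O$, and their associated exit times.
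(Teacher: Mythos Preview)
Your proposal is correct and follows essentially the same route as the paper: both reduce to showing $\inf\{s>h:\omega_s\notin\bar O\}=\inf\{s>0:\omega_s\notin\bar O\}$, use $h\le\hat\zeta(\omega)$ to conclude that the exit set $E$ lies in $[h,\infty)$, and then handle the endpoint case $\omega_h\notin\bar O$ via right-continuity of $\omega$ and openness of $\bar O^{c}$. The only cosmetic difference is in the case split (you partition by $h<\hat\zeta$ versus $h=\hat\zeta$, while the paper partitions by $\omega(h)\in\bar O$ versus $\omega(h)\notin\bar O$), but these amount to the same argument.
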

\begin{proof}
From the definition of $\theta$,
$$
\zeta \circ \theta_{h}(\omega)  =
 \inf\{t > 0: \omega(t + h) \notin \bar O\}
 = \inf\{t' > h: \omega(t') \notin \bar O\}  - h.
$$
Therefore, it suffices to show that $\inf\{t' > h: \omega(t') \notin \bar O\}  =
\inf\{t > 0: \omega(t) \notin \bar O\}$. Observe that
$\omega(t) \in O$ for all $t \in [0, h)$ due to $h \le \hat \zeta$.
Therefore,
\begin{enumerate}
 \item if $\omega(h) \in \bar O$, then
 $\inf\{t' > h: \omega(t') \notin \bar O\}  =
\inf\{t > 0: \omega(t) \notin \bar O\}$ by the definition of infimum;
\item if $\omega(h) \notin \bar O$, then
$\inf\{t > 0: \omega(t) \notin \bar O\} = h$.
On the other hand, $\inf\{t' > h: \omega(t') \notin \bar O\}  = h$ by the right continuity of $\omega$.\qedhere
\end{enumerate}
\end{proof}

\begin{lemma}
 \label{l:con01}
  Let $x\in \bar O$. If 
  $\mathbb P^{x} \left(\hat\Pi \in \bar O^{c,*}\right) = 1$, then $\mathbb P^{x} \left(\hat \zeta = \zeta\right) =1$.
\end{lemma}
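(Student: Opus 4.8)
The plan is to reduce the equality $\hat\zeta=\zeta$ to a statement about the process restarted at its exit point $\hat\Pi$, and then invoke the regularity of that point together with the strong Markov property. The starting observation is that $\hat\zeta\le\zeta$ always holds, since $O\subseteq\bar O$ forces $\{t>0:\omega_t\notin\bar O\}\subseteq\{t>0:\omega_t\notin O\}$, so the whole content is the reverse inequality $\zeta\le\hat\zeta$ almost surely.

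First I would record the pathwise decomposition $\zeta=\hat\zeta+\zeta\circ\theta_{\hat\zeta}$. This comes directly from Proposition \ref{p:zeta02}: taking $h=\hat\zeta(\omega)$, which lies in $[0,\hat\zeta(\omega)]$, gives $\zeta\circ\theta_{\hat\zeta}(\omega)=\zeta(\omega)-\hat\zeta(\omega)$ for every $\omega$ with $\hat\zeta(\omega)<\infty$. Since $\zeta\circ\theta_{\hat\zeta}\ge 0$, the desired equality $\hat\zeta=\zeta$ holds precisely on $\{\zeta\circ\theta_{\hat\zeta}=0\}$, so it suffices to prove $\mathbb P^x(\zeta\circ\theta_{\hat\zeta}=0)=1$. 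On the event $\{\hat\zeta=\infty\}$ one has $\zeta=\infty=\hat\zeta$ trivially; moreover the hypothesis $\hat\Pi=X_{\hat\zeta}\in\bar O^{c,*}$ a.s.\ already forces $\mathbb P^x(\hat\zeta<\infty)=1$, since $\hat\Pi$ is only defined there.

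Next I would unwind $\zeta\circ\theta_{\hat\zeta}(\omega)=\tau_{\bar O}(\theta_{\hat\zeta}\omega)$ and note that the shifted path starts at $(\theta_{\hat\zeta}\omega)(0)=\omega(\hat\zeta)=\hat\Pi(\omega)$. Applying the strong Markov property at the stopping time $\hat\zeta$ to the $\mathcal F^0$-measurable functional $\omega'\mapsto I_{\{\tau_{\bar O}(\omega')=0\}}$ yields
$$\mathbb P^x(\zeta\circ\theta_{\hat\zeta}=0)=\mathbb E^x\big[\mathbb P^{\hat\Pi}(\tau_{\bar O}=0)\big].$$
Now the regularity enters: by the very definition of $\bar O^{c,*}$ as the set of points regular for $\bar O^c$, every $y\in\bar O^{c,*}$ satisfies $\mathbb P^y(\tau_{\bar O}=0)=1$. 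Since the hypothesis guarantees $\hat\Pi\in\bar O^{c,*}$ almost surely, the integrand equals $1$ a.s., and therefore $\mathbb P^x(\zeta\circ\theta_{\hat\zeta}=0)=1$, giving $\hat\zeta=\zeta$ a.s.

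I expect the only delicate points to be bookkeeping rather than conceptual: justifying the substitution of the random time $h=\hat\zeta$ into Proposition \ref{p:zeta02} (legitimate because the identity there is pathwise in $h$), confirming measurability so the strong Markov property applies to $I_{\{\tau_{\bar O}=0\}}$, and disposing of the event $\{\hat\zeta=\infty\}$. The conceptual heart is the strong Markov restart at $\hat\Pi$: regularity is an almost-sure statement about the restarted process, so a purely pathwise argument cannot replace it, as being at a regular boundary point does not force immediate exit along the given path but only after re-randomizing via $\mathbb P^{\hat\Pi}$.
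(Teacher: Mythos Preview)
Your proof is correct and follows essentially the same approach as the paper: decompose $\zeta=\hat\zeta+\zeta\circ\theta_{\hat\zeta}$ via Proposition~\ref{p:zeta02}, apply the strong Markov property at $\hat\zeta$ to rewrite $\mathbb P^x(\zeta\circ\theta_{\hat\zeta}=0)$ as $\mathbb E^x[\mathbb P^{\hat\Pi}(\zeta=0)]$, and conclude from the regularity of $\hat\Pi\in\bar O^{c,*}$. Your version is more detailed in handling the bookkeeping (the case $\hat\zeta=\infty$, measurability for strong Markov), but the argument is the same.
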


\begin{proof}
By Proposition \ref{p:zeta02}, $\zeta = \hat \zeta + \zeta \circ \theta_{\hat \zeta}$. Furthermore, if $X\left({\hat \zeta}\right) \in \bar O^{c,*}$, then $\mathbb P^{X({\hat \zeta})} (\zeta = 0) = 1$.
 Therefore, since $\mathbb P^{x} (X({\hat \zeta}) \in \bar O^{c,*}) = 1$,
 $$\mathbb P^{x} \left(\zeta  = \hat\zeta \right) = \mathbb P^{x} \left(\zeta \circ \theta_{\hat \zeta} = 0\right) = \mathbb E^{x} \left[ \mathbb P^{X({\hat \zeta})} (\zeta = 0)\right] = 1.\qedhere$$
\end{proof}

\begin{proposition}
 \label{p:con03}
 Suppose there exists a neighborhood
 $N_1$ of $\partial_1 O$ 
 such that
 $\mathbb P^x \left( \hat \Pi\in \bar N_1\right) = 0$ 
 for all $x\in \bar O$.
 Let $O_{1} = N_{1} \cup O$, and accordingly define
$\zeta_{1} = \tau_{\bar O_{1}}, \hat \zeta_{1} = \tau_{O_{1}}, 
\Pi_{1} = X(\zeta_{1}), \hat \Pi_{1} = X(\hat \zeta_{1})$.
Then, for all $x\in \bar O$
$$\mathbb P^{x}\left(\Pi = \hat \Pi= \Pi_{1} = \hat \Pi_{1}, 
\zeta = \hat \zeta = \zeta_{1} = \hat \zeta_{1}\right) = 1
$$
and $\zeta_{1}$ and $\Pi_{1}$ are almost surely continuous under $\mathbb P^{x}$.
\end{proposition}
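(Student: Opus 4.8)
The plan is to treat $O_1=O\cup N_1$ as an enlarged domain on which Lemma~\ref{l:con01} and Proposition~\ref{p:con02} apply verbatim (both proofs use only that the underlying set is a bounded open set, so they transfer to $O_1$), and then to push all the resulting identities back down to $O$ using the hypothesis $\mathbb P^x(\hat\Pi\in\bar N_1)=0$. I would begin by recording the geometry: since closure commutes with finite unions, $\bar O_1=\bar O\cup\bar N_1$, hence $\bar O_1^c=\bar O^c\cap\bar N_1^c$, while $O_1$ is open. The decisive preliminary observation is that every $x\in\partial_0 O$ is regular for $\bar O^c$, which by the definition of regularity means $\mathbb P^x(\zeta=0)=1$; since $\hat\zeta\le\zeta$ this forces $\hat\zeta=0$ and $\hat\Pi=\Pi=X_0=x$ almost surely. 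Substituting into $\mathbb P^x(\hat\Pi\in\bar N_1)=0$ yields $x\notin\bar N_1$, so $\partial_0 O\subset\bar N_1^c$; on the other hand $\partial_1 O\subset N_1\subset\bar N_1$ by construction. Together with $O\subset O_1$ this gives $\bar O\cap\partial O_1=\partial_0 O$.

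Next I would establish the chain $\hat\zeta=\zeta$, $\hat\zeta=\hat\zeta_1$, $\hat\zeta_1=\zeta_1$, each $\mathbb P^x$-a.s. For the first, $\hat\Pi\in O^c$ always and $\hat\Pi\notin\bar N_1$ a.s., so $\hat\Pi\in O^c\cap\bar N_1^c\subset\partial_0 O\cup\bar O^c=\bar O^{c,*}$, and Lemma~\ref{l:con01} gives $\hat\zeta=\zeta$. The second is a pathwise sandwich: before $\hat\zeta$ the path lies in $O\subset O_1$, so $\hat\zeta_1\ge\hat\zeta$, while $\hat\Pi\in O^c\cap N_1^c=O_1^c$ a.s. gives $\hat\zeta_1\le\hat\zeta$. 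For the third I would apply the argument of Lemma~\ref{l:con01} to $O_1$: since $\hat\Pi_1=X_{\hat\zeta_1}=X_{\hat\zeta}=\hat\Pi$ a.s. and $O^c\cap\bar N_1^c\subset\bar O_1^{c,*}$ (proved below), we get $\hat\Pi_1\in\bar O_1^{c,*}$ a.s. and hence $\hat\zeta_1=\zeta_1$. Chaining yields $\zeta=\hat\zeta=\hat\zeta_1=\zeta_1$ a.s., and evaluating the path at this common time gives $\Pi=\hat\Pi=\hat\Pi_1=\Pi_1$ a.s., which is the first assertion.

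The step I expect to be the main obstacle is the regularity transfer $O^c\cap\bar N_1^c\subset\bar O_1^{c,*}$, equivalently $\bar O\cap\partial_1 O_1=\emptyset$. The exterior part $\bar O^c\cap\bar N_1^c=\bar O_1^c$ lies in its own fine closure trivially, so everything reduces to showing that each $y\in\partial_0 O$ (which by the first paragraph lies in $\bar N_1^c$) is regular for $\bar O_1^c$. Here I would combine two facts: $y$ is regular for $\bar O^c$, so there are times arbitrarily close to $0$ at which the path lies in $\bar O^c$; and $y$ lies in the open set $\bar N_1^c$, so by right continuity the path stays in $\bar N_1^c$ for a positive length of time. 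Intersecting, at arbitrarily small times the path lies in $\bar O^c\cap\bar N_1^c=\bar O_1^c$, i.e. $y$ is regular for $\bar O_1^c$. Thus $\partial_0 O=\bar O\cap\partial O_1\subset\partial_0 O_1$, so $\bar O$ avoids $\partial_1 O_1$ altogether.

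Finally, with $\bar O\subset\mathbb R^d\setminus\partial_1 O_1$ and $\mathbb P^x(\hat\zeta_1=\zeta_1)=1$ for every $x\in\bar O$ now in hand, I would invoke Proposition~\ref{p:con02} applied to the domain $O_1$ to conclude that $\zeta_1$ and $\Pi_1$ are almost surely continuous under $\mathbb P^x$, which completes the proof.
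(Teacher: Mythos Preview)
Your argument is correct and follows essentially the same route as the paper: enlarge to $O_1$, show that the four exit times coincide for every $x\in\bar O$, verify $\bar O\cap\partial_1 O_1=\emptyset$, and then invoke Proposition~\ref{p:con02} on $O_1$; your key regularity transfer in step~6 (points of $\partial_0 O$ are regular for $\bar O_1^{\,c}$ via right continuity inside the open set $\bar N_1^{\,c}$) is precisely the mechanism the paper uses in its second bullet case. The only organizational difference is that the paper proves $\hat\zeta=\zeta_1$ in one shot by a pathwise case split on whether $\hat\Pi$ lands in $\bar O^{\,c}\setminus\bar N_1$ or in $\partial_0 O\setminus\bar N_1$, whereas you factor it as $\hat\zeta=\zeta$, $\hat\zeta=\hat\zeta_1$, $\hat\zeta_1=\zeta_1$ via two applications of Lemma~\ref{l:con01}. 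One small point to tighten: in your pathwise sandwich for $\hat\zeta=\hat\zeta_1$, the implication ``$\hat\Pi\in O_1^{\,c}$ gives $\hat\zeta_1\le\hat\zeta$'' uses $\hat\zeta>0$ (since $\hat\zeta_1$ is an infimum over $t>0$); the residual case $\hat\zeta=0$ occurs exactly when $x\in\partial_0 O$, and there your step~6 argument already yields $\zeta_1=0$ directly, which closes the chain.
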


\begin{proof}

Since $\bar O_{1} \supset O_{1} \supset \bar O \supset O$,
\begin{equation}
 \label{eq:123}
 \mathbb P^{x} \left(
 \hat \zeta \le \zeta  \le \hat \zeta_{1} \le \zeta_{1} \right) = 1.
\end{equation}
We first show that $\mathbb P^{x} \left(\hat \zeta = \zeta_{1} \right) = 1.$
Since $\mathbb P^{x} ( \hat \Pi \in \bar N_{1}) = 0$
and $\mathbb P^{x} ( \hat \Pi \in O) = 0$ due to the right continuity of $X$, the latter can be rewritten as
 $$\mathbb P^{x} \left(\hat \Pi\in \bar O^{c} \setminus \bar N_{1}\right) + 
 \mathbb P^{x}\left(\hat \Pi \in \partial_{0} O \setminus \bar N_{1}\right) = 1.$$
Thus it suffices to discuss the following two cases: 
\begin{itemize}
 \item If $\omega \in \left\{\hat \Pi \in \bar O^{c} \setminus \bar N_{1} \right\}$, then since 
 $\bar O^{c} \setminus \bar N_{1} \subset \bar O_{1}^{c}$, 
 $\hat\zeta(\omega) = \zeta_{1} (\omega)$.
 \item 
 If  $ \omega \in \left\{\hat \Pi \in \partial_{0} O \setminus \bar N_{1}\right\}$, 
 then since
 $\partial_{0} O \setminus \bar N_{1}$ is open relative to $\partial O$, 
 there exists $r>0$ 
 such that 
 $B_{r}(\hat \Pi(\omega)) \cap \bar N_{1} = \emptyset$.
 In addition, 
 since $\hat \Pi(\omega) \in \partial_{0} O \subset \bar O^{c, *}$, 
 Lemma \ref{l:con01} implies that
 there 
 exists a sequence $h_{n}\downarrow 0$ as $n$ goes to infinity, such that
 $\omega(\hat \zeta + h_{n}) \notin \bar O$ 
 for all $n$. Together with right continuity of $\omega$,
 $$\omega\left(\hat \zeta + h_{n}\right) \in B_{r}\left(\hat \Pi(\omega)\right) \setminus \bar O = 
 B_{r}(\hat\Pi(\omega)) \setminus \bar O_{1}, \hbox{ and }
 \lim_{n\rightarrow\infty}\omega\left(\hat\zeta + h_{n}\right) = \hat \Pi\left(\omega\right).
 $$
 Thus,
 $\hat \zeta(\omega) 
 = \zeta_{1} (\omega)$
 also holds. 
 
\end{itemize}
  
 Then the above two cases together with \eqref{eq:123} imply that  
 $$ \mathbb P^{x} \left(\hat \zeta = \zeta = \hat \zeta_1 = \zeta_{1}\right) = 1,
 $$ and
$\mathbb P^{x}(\hat \Pi = \Pi = \Pi_{1} = \hat \Pi_{1}) = 1
$ also holds. Finally, applying Proposition \ref{p:con02} on $x\in \bar O$ with respect to the expanded domain $O_{1}$.
Since either $x\in \partial_{0} O\subset \partial_{0}O_{1}$ (note that $\mathbb P^x \left( \hat \Pi\in \bar N_1\right) = 0$), or $x\in \bar O \setminus \partial_{0} O \subset O_{1}$, $(\zeta_{1}, \Pi_{1})$ is continuous almost surely in $\mathbb P^{x}$.
\end{proof}

By wrapping up all the above outcomes together, we can provide the proof of the main result on the
sufficient conditions for the stochastic representation $v = \mathbb E [F]$ in \eqref{eq:fk01} to be a generalized viscosity solution of \eqref{eq:pde01},
stated in
Theorem \ref{t:main01} in Section \ref{sec:main}.

\begin{proof} (of Theorem \ref{t:main01})
As in Proposition \ref{p:con03}, we expand the domain $O$ into $O_{1}$ and set the corresponding operators
$\left(\zeta_{1}, \hat \zeta_{1}, \Pi_{1}, \hat \Pi_{1}\right)$.
Consider the $O_{1}$-associated value function $v_{1}$ in the form of
\eqref{eq:fk01}, i.e.
$$
v_{1}(x) :=
\mathbb E^{x} \left[\int_{0}^{\zeta_{1}} e^{-\lambda s} \ell(X_s) ds + e^{-\lambda \zeta_{1}}
g( \Pi_{1}) \right],
$$
then $v_{1} = v$ on $\bar O$ because $\mathbb P^{x}\left(\Pi = \hat \Pi= \Pi_{1} = \hat \Pi_{1}, 
\zeta = \hat \zeta = \zeta_{1} = \hat \zeta_{1}\right) = 1
$. Proposition \ref{p:con03} also implies that $(\zeta_{1}, \Pi_{1})$ is continuous under $\mathbb P^{x}$ for all $x\in \bar O$. Therefore, 
$v_{1}$ is continuous in $\bar O$ due to Lemma \ref{l:v01}, so is $v$.
Finally, Lemma \ref{l:gvis01} concludes the main result.
\end{proof}

\begin{remark}
	Notice that in the definition of $v$ in \eqref{eq:fk01}, we adopt the random time $\zeta = \tau_{\bar O}$, instead of possible alternative choices
	$\hat \zeta = \tau_{O}$ or
	$\bar \zeta (\omega) = \inf \{t\ge 0: \omega_{t} \notin O\}$. All three are stopping times, and the main difference can be summarized as:
	$\bar \zeta$ is an entrance time to $O^c$, while
	$\hat \zeta$ and $\zeta$ are hitting time to $O^c$ and $\bar O^c$, respectively. 
	From the proof of Theorem \ref{t:main01}, under the assumptions made, Proposition \ref{p:con03} tells us that
	$$\mathbb P^{x} \left(\zeta = \hat \zeta, \Pi = \hat \Pi\right) = 1, \ \forall x\in \bar O$$
	always holds. Therefore, 
	if we denote as $\hat v$ and $\bar v$ the Feynman-Kac functionals with the random time 
	$\zeta$ being replaced by $\hat \zeta$ and $\bar \zeta$ in \eqref{eq:fk01}, respectively, then $v= \hat v$ on $\bar O$. Hence,  Theorem \ref{t:main01} still hods with $v$ replaced by $\hat v$ without extra efforts. The main reason to adopt $\zeta$ is for the convenience throughout the presentation.
	
	On the other hand, Theorem \ref{t:main01} does not hold anymore, if $v$ is replaced by $\bar v$. Indeed, under the same assumptions of Theorem \ref{t:main01},
	$$\mathbb P^{x} \left(\zeta = \hat \zeta = \bar \zeta, \Pi = \hat \Pi = \bar \Pi\right) = 1,$$ 
	and therefore $v= \hat v = \bar v$, but only for $x\in O \cup \partial_{0} O$. As an example, consider the stochastic exit example with $\epsilon = 0$
	given in Section \ref{sec:e01}. It is a straight forward calculation that 
	$$v(x)= \hat v(x) = \bar v(x), \ \forall x\in (0, 1], $$
	while
	$$v(0) = \hat v(0) = 1 - e^{-1} \neq \bar v_{0} (0) = 0.$$
\end{remark}

The next is an immediate consequence of Theorem \ref{t:main01}, which prepares us for the non-stationary problems discussed in the following section.
\begin{corollary}
 \label{cor:01}
 Let $O$ be a cylinder set of the form 
 $O = (0, 1) \times A$ for some open set $A$.
 If (1) $\bar A^{c, *} = A^{c}$  with respect to $X_{-1}:= (X_{2}, \ldots, X_{d})$; and (2) $X_{1}$ is a subordinate process,
then $v$ of \eqref{eq:fk01} is a generalized viscosity solution of \eqref{eq:pde01} with $\Gamma_{out} \supset \{1\} \times A$.
\end{corollary}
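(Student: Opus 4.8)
The plan is to check the single hypothesis of Theorem \ref{t:main01}: produce a neighborhood $N_1$ of $\partial_1 O$ with $\mathbb P^x(\hat\Pi\in\bar N_1)=0$ for every $x\in\bar O$, after which the theorem delivers both assertions once I know $\{1\}\times A\subset\partial_0 O$. Writing points of $\mathbb R^d$ as $(x_1,\xi)$ with $\xi\in\mathbb R^{d-1}$, split $\partial O=(\{0,1\}\times\bar A)\cup([0,1]\times\partial A)$. For the lateral faces I would invoke hypothesis (1): $\bar A^{c,*}=A^c$ forces every $\xi\in\partial A$ to be regular for $\bar A^c$ with respect to $X_{-1}$, so $X_{-1}$ leaves $\bar A$ at once, $X$ leaves $\bar O$ at once, and $[0,1]\times\partial A\subset\partial_0 O$. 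For the top I would use hypothesis (2): a subordinator is non-decreasing and leaves its starting level immediately, so from $x_1=1$ one has $X_1(t)>1$ for all small $t>0$ and $X$ exits $\bar O$ instantly, giving $\{1\}\times A\subset\partial_0 O$. From a bottom point $(0,\xi)$ with $\xi\in A$, the first coordinate merely climbs into $(0,1)$ while $X_{-1}$ stays in $A$, so the path lingers in $\bar O$ and the point is irregular. Hence $\partial_1 O=\{0\}\times A$.

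I would then take $N_1=(-\epsilon,\epsilon)\times A$, so $\bar N_1=[-\epsilon,\epsilon]\times\bar A$, and prove $\mathbb P^x(\hat\Pi\in\bar N_1)=0$ by classifying the exit of $X$ from $O$ according to which coordinate leaves first. Because $X_1$ is non-decreasing it can only leave $(0,1)$ through the top, so whenever the first coordinate exits first the first component of $\hat\Pi$ is $\ge 1>\epsilon$ and $\hat\Pi\notin\bar N_1$; since moreover $X_1(\hat\zeta)\ge x_1$, only starting points with $x_1\le\epsilon$ need any argument at all. When instead $X_{-1}$ leaves $A$ first, everything hinges on the transverse exit point $X_{-1}(\hat\zeta)$ lying in $\bar A^c$ rather than on $\partial A$: granting this, $\hat\Pi\in[0,1]\times\bar A^c$ is disjoint from $[-\epsilon,\epsilon]\times\bar A=\bar N_1$. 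These two cases would give the hypothesis of Theorem \ref{t:main01}, whence $v$ is a generalized viscosity solution and $\{1\}\times A\subset\partial_0 O\subset\Gamma_{out}$.

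The crux, and the step I expect to be the main obstacle, is exactly this transverse claim, because the corner cannot be dodged geometrically: every neighborhood of $\{0\}\times A$ has closure containing $\{0\}\times\partial A$, so a lateral exit that crept onto $\partial A$ with a small first coordinate would land in $\bar N_1$ and wreck the estimate. To exclude it I would first run the transverse analogue of Lemma \ref{l:con01} for $(X_{-1},A)$: the exit point $X_{-1}(\tau_A)$ always lies in $A^c=\bar A^{c,*}$, so that lemma forces $X_{-1}$ to leave $A$ and $\bar A$ simultaneously, i.e. $\tau_A=\tau_{\bar A}$. I would then argue that $X_{-1}$ reaches the open exterior by a genuine jump, so that $X_{-1}(\tau_A)=X_{-1}(\tau_{\bar A})\in\bar A^c$ almost surely. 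This is where the jump nature of the transverse generator --- the fractional Laplacian in the intended application --- is indispensable: a transverse motion with a continuous component could instead creep onto $\partial A$, and the argument, and indeed the conclusion, would break down. Once the transverse claim is secured, assembling the two cases and invoking Theorem \ref{t:main01} (through Proposition \ref{p:con03}) completes the proof.
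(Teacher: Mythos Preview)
Your identification of $\partial_1 O=\{0\}\times A$ and the plan to invoke Theorem~\ref{t:main01} are exactly the paper's. The gap is the choice of $N_1$. With the slab $N_1=(-\epsilon,\epsilon)\times A$ you get $\bar N_1=[-\epsilon,\epsilon]\times\bar A$, which contains the whole strip $[0,\epsilon]\times\partial A$; any lateral exit through $\partial A$ while the first coordinate is still small lands in $\bar N_1$. You diagnose this correctly and try to rule it out by forcing $X_{-1}(\tau_A)\in\bar A^c$, but that is an extra hypothesis not present in the corollary: condition~(1) only says every boundary point of $A$ is regular for $\bar A^c$, which holds for, say, Brownian $X_{-1}$ on a smooth $A$, where exit is almost surely by creeping onto $\partial A$. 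So your argument does not establish the corollary as stated, and your assertion that ``the conclusion would break down'' for continuous transverse motion is wrong --- only your particular $N_1$ fails.

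The paper avoids the transverse issue geometrically rather than probabilistically: it takes the tapered neighborhood $N_1=\bigcup_{\xi\in A}N(\xi)$ with $N(\xi)=\bigl(-\tfrac{\rho_\xi}{2},\tfrac{\rho_\xi}{2}\bigr)\times B_{\rho_\xi/2}(\xi)$ and $\rho_\xi=\operatorname{dist}(\xi,\partial A)\wedge 1$. Because the boxes shrink as $\xi$ approaches $\partial A$, every $(y_1,\eta)\in\bar N_1$ satisfies $\operatorname{dist}(\eta,\partial A)\ge|y_1|$; in particular $\bar N_1$ meets $\mathbb R\times\partial A$ only at $\{0\}\times\partial A$, and every point of $\bar N_1\cap O^c$ has first coordinate $\le 0$. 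The subordinator hypothesis then does all the work: the first coordinate of $\hat\Pi$ is at least $x_1$, so from any start with $x_1>0$ one has $\hat\Pi\notin\bar N_1$ regardless of whether $X_{-1}$ jumps or creeps out of $A$. No transverse-jump assumption is needed.
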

\begin{proof}
Since $X$ is Feller, both $X_{1}$ and $X_{-1}$ are
Feller, and $\bar O^{c,*} = \bar O \setminus (\{0\} \times A)$.
To apply Theorem \ref{t:main01}, we can take
$N_{1} = \cup_{x\in A} N(x)$, where $N(x)$ is the neighborhood of $(0, x)$ given by
$$N(x) = \left(- \frac{\rho_{x}}{2}, \frac{\rho_{x}}{2}\right) \times B_{\frac{\rho_{x}}{2}}\left(x\right)$$
with $\rho_{x} = dist(x, \partial A) \wedge 1$.
\end{proof}

\section{Applications to non-stationary problems}\label{sec:app}
In this section, we apply the main results in Theorem \ref{t:main01} to solve two non-stationary equations involving fractional Laplacian operators, one being linear and the other non-linear. Given the state space $O$, its non-stationary (parabolic) domain $Q_{T}$
and its non-stationary boundary
$\mathcal P Q_{T}$ is defined by
$$Q_{T} := (0, T) \times O, \quad
\mathcal P Q_{T} := (0, T] \times \mathbb R^{d} \setminus Q_{T}.$$
Given an operator $G(u,t,x)$, the viscosity solution of non-stationary problem 
\begin{equation}
G(u, t, x) = 0, \hbox{ on }  Q_{T}, \hbox{ and } u = 0 \hbox{ on } \mathcal P Q_{T}.\label{non-liear}
\end{equation}
can be defined similarly as in Definition \ref{d:vis04} for the
stationary problem:
\begin{definition}
	\label{d:vis05}
\begin{enumerate}
	\item Given $u \in USC (\bar Q_{T})$ and $(t, x)\in  \bar Q_{T}$,
	the space of supertest functions is
	$$ J^{+} (u, t, x) = \{\phi \in C_{0}^{\infty}(\mathbb R^{d+1}),
	\hbox{ s.t. } \phi \ge (u I_{\bar Q_{T}})^{*} \hbox{ and } \phi(t, x) = u(t, x)\}.$$
	$u$ satisfies the viscosity
	subsolution property at $(t,x)$, if
	$G(\phi, t, x)  \le 0$, for $\forall \phi \in J^{+} (u, t, x).$
	\item
	Given $u \in LSC (\bar Q_{T})$ and $(t, x) \in  \bar Q_{T}$, the
	space of subtest functions is, $$ J^{-} (u, t, x) = \{\phi \in C_{0}^{\infty}(\mathbb R^{d+1}),
	\hbox{ s.t. } \phi \le (u I_{\bar Q_{T}} )_{*} \hbox{ and } \phi(t, x) = u(t, x)\}.$$
	$u$ satisfies the viscosity
	supersolution property at $(t,x)$,
	if
	$G(\phi, t, x)  \ge 0, \ \text{ for }\forall \phi \in J^{-} (u, t, x ).$
	\item A function $u\in C(\bar Q_{T})$ is a viscosity solution (of \eqref{non-liear}), if (i) $u$ satisfies both the viscosity subsolution and supersolution properties
	at each $(t, x) \in Q_{T}$; (ii) $u\equiv 0$ on $\mathcal P Q_{T} \cap \partial Q_{T}$.
\end{enumerate}
\end{definition}

The nonlinear equation we are interested in is 
\begin{equation}
 \label{eq:pde19}
 - \partial_{t}u -
 |\nabla_{x} u|^{\gamma} + (- \Delta_{x})^{\alpha/2} u + 1 = 0 \hbox{ on } Q_{T}, \hbox{ and } u = 0 \hbox{ on } \mathcal PQ_{T}.
\end{equation}
where for a function $\phi$ on $(t, x) \in \mathbb R \times \mathbb R^{d}$, the fractional Laplacian operator $(-\Delta_{x})^{\alpha/2} \phi(t, x) = (-\Delta)^{\alpha/2} \phi(t, \cdot)(x)$, and for a function $\tilde\phi$ on $x \in \mathbb R^{d}$,
\begin{equation*}
\label{eq:Delta}
-(-\Delta)^{\alpha/2} \tilde\phi(x)  = C_{d} \int_{\mathbb R^{d} \setminus \{0\}} [ \tilde\phi(x + y) - \tilde\phi(x) - y \cdot D \tilde\phi(x) I_{B_{1}}(y)] \frac{dy}{|y|^{d+\alpha}}
\end{equation*}
with some normalization constant $C_{d}$, and the index of the fractional Laplacian operator $\alpha\in (0, 2)$.

Such form of equations naturally arises in many applications.
 If  $\gamma = 1$, then  \eqref{eq:pde19} becomes an HJB equation  with $- |\nabla_{x} u| = \inf_{b \in B_{1}} (b \cdot \nabla_{x} u)$ (see \cite{CIL92}, and  its important roles in stochastic control problems in \cite{FS06, Pha09, YZ99, Zha17});
If $\gamma >1$, then  \eqref{eq:pde19} becomes
 deterministic KPZ equation (see \cite{AP18}). It can also be regarded as HJB equation because $- |\nabla_{x} u|^{\gamma} = \inf_{b\in \mathbb R^{d}} ( - b \cdot \nabla_{x} u + L(b))$,
 with $L(b) = \sup_{p\in \mathbb R^{d}} (p \cdot b - H(p))$ being the Legendre transform of the function $H(p) = |p|^{\gamma}$ (see Section 3.3 of \cite{Eva98}).
\subsection{Linear equation}
To analyze the solvability of \eqref{eq:pde19}, first consider a linear equation of a slightly more general form
\begin{equation}
\label{eq:pde18}
\partial_{t} u  + {\bf b} \cdot \nabla_{x} u - |\sigma|^{\alpha} (- \Delta_{x})^{\alpha/2} u  + \ell = 0 \hbox{ on } Q_{T}, \hbox{ and }
u = 0 \hbox{ on } \mathcal P Q_{T}.
\end{equation}
where
$\bb$ is a Lipschitz continuous vector field $\mathbb R^{d} \mapsto \mathbb R^{d}$
known as a drift, $\sigma$ is a constant in $\mathbb R^d \times \mathbb R^d$ known as a volatility and $l$ is Lipschitz continuous function $\mathbb R^{d+1} \mapsto \mathbb R$. Define the associated stochastic process $X$ as 
\begin{equation}
 \label{eq:sdeX}
 d X_t = \bb(X_{t}) dt + \sigma d J_{t},
\end{equation}
where $J$ is
an isotropic $\alpha$-stable process for some $\alpha \in (0, 2)$
with its generating triplets (see notions of Levy process in \cite{Sat13} or \cite{Ber96})
$$A = 0, \ \nu(d y) = \frac{1}{|y|^{d+\alpha}}dy, \ b = 0.$$
There exists a unique strong solution for \eqref{eq:sdeX}, and $X$ is a Feller process. It has
a C\`adl\`ag version, with
its generator $\mathcal L$ satisfying that its domain $D(\mathcal L) \supset C^{2}(\mathbb R^{d})$. In particular,
if $\phi \in C^{2}(\mathbb R^{d})$, then $\mathcal L$ is consistent to
the following integro-differential operator,
\begin{equation}
 \label{eq:L01}
 \mathcal L \phi(x) =  \bb(x) \cdot \nabla \phi(x)  - |\sigma|^{\alpha}(- \Delta)^{\alpha/2} \phi(x).
\end{equation}
With obvious extension of $\mathcal L\phi(x)$ to
partial operator $\mathcal L_{x}\phi(t, x)$ by $\mathcal L_{x} u( t, x) = \mathcal Lu (t, \cdot)(x)$, PDE \eqref{eq:pde18}
becomes
$$\partial_{t} u  + \mathcal L_{x} u  + \ell = 0 \hbox{ on }
Q_{T}, \hbox{ and } u = 0 \hbox{ on } \mathcal PQ_{T}.
$$

Next, we solve the above non-stationary PDE via the solution of a stationary PDE
and its associated random process: if \eqref{eq:pde18} has a smooth solution $u$ in $\bar Q_{T}$, then the change of variable of
\begin{equation}
 \label{eq:cv01}
 y = (t, x) \in \mathbb R^{d+1}, \quad w(y) = e^{\lambda t}u(t, x)
\end{equation}
with a given constant $\lambda >0$
implies that $w$ satisfies following stationary equation with the domain in
$\mathbb R^{d+1}$,
\begin{equation}
 \label{eq:pde17}
 - \mathcal L_{1} w(y) + \lambda w (y)  - \ell_{1} (y) = 0 \hbox{ on } Q_{T},
 \ \hbox{ and } w(y) = 0 \hbox{ on } \mathcal P Q_{T} \cap \partial Q_{T},
\end{equation}
where
$
\mathcal L_{1} w(y) = (\partial_{t} u + \mathcal L_{x} u) (t, x), \
\ell_{1} (y) = e^{\lambda t} \ell(y_{1}, y_{-1})
$
and $y_{-1} = [y_{2}, \ldots, y_{d+1}]^{T}$ is a $d$-dimensional
column vector with elements of the vector $y$ except the first scalar $y_{1}$.
In particular, $\mathcal L_{1}$ is the generator of $\mathbb R^{d+1}$-valued Markov process $s \mapsto Y_{s} = (t +s, X_{t+s})$ for $X$ of \eqref{eq:sdeX}, which follows the following dynamics
\begin{equation}
 \label{eq:sdeY}
 d Y_s = \bb_{1}(Y_s) dt + \sigma_{1} d J_{t}, \ Y_0 := y = (t, X_t),
\end{equation}
where
$\bb_{1}(y) = \left[
\begin{array}{c}
1\\
 \bb(t, x)
\end{array}
\right]$ and
$
\sigma_{1} =  \left[
\begin{array}{c}
 0_{1\times d}	\\
  I_{d}
\end{array}
\right]\sigma
$,
with $d\times d$ identity matrix $I_{d}$ and $d$-dimensional zero row vector  $0_{1\times d}$. Theorem \ref{t:main01} can be applied to check if the Feynman-Kac functional associated to
the random process \eqref{eq:sdeY} is a generalized viscosity solution of the stationary PDE \eqref{eq:pde17}. 

Furthermore, with additional regularity conditions, we show in the following that the generalized viscosity solution coincide with the viscosity solution in the sense of Definition \ref{d:vis05}. As a preparation, we define the exterior cone condition.
\begin{definition}
For $y \in \mathbb R^{d}\setminus \{0\}$ and $\theta \in (0, \pi)$, define
the cone $C(y, \theta)$ with the direction $y$ and aperture $\theta$ as
$$C(y, \theta) = \{x\in \mathbb R^{d}: x \cdot y > |x| \cdot |y| \cdot \cos \theta \}.$$
Denote as
$C_{r}(y, \theta)$ the truncated cone by $B_{r}$, i.e. $C_{r}(y, \theta) = C(y, \theta) \cap B_{r}$. $O$ satisfies {\it exterior cone condition with $C_{r(x)}(\bv_{x}, \theta_{x})$},
 if there exists $r(x): \mathbb R^d \rightarrow \mathbb R^+$, $\bv_x: \mathbb{R^d}\rightarrow \mathbb R^d\setminus \{0\}$, and $\theta_x: \mathbb R^d \rightarrow (0,\pi)$, such that for each $x\in \partial O$,  its associated
 truncated exterior cone $x + C_{r(x)}(\bv_{x}, \theta_{x}) \subset O^{c}$.
\end{definition} 
\begin{corollary}
 \label{c:plin}
 Let $\bb$ be Lipschitz and $\sigma$ be a constant, and
 $O$ be a bounded open set satisfying
 exterior cone condition.
 If $(\bb, \sigma)$
 satisfies
 one of the following conditions ({\bf A1}) - ({\bf A3}),
 \begin{enumerate}
 	\item [({\textbf A1})] $| \sigma | >0$ and $\alpha \ge 1$;
 	\item [({\textbf A2})] $|\sigma| >0$ and $\bb \equiv 0$;
 	\item [({\textbf A3})] $\bb(x) \cdot \bv_{x} > 0$ for all  $x\in \partial O$,
 \end{enumerate}
then
the function
 $v_{1}$ defined by
 \begin{equation}
 \label{eq:v03}
 v_{1}(t, x) = \mathbb E^{t, x} \Big[ \int_{t}^{\zeta \wedge T} \ell(s, X_{s}) ds\Big]
 \end{equation}
 is a viscosity solution of \eqref{eq:pde18}, where $\zeta$ is defined as lifetime $\tau_{\bar O} (X)$ for $X$ in \eqref{eq:sdeX}.

\end{corollary}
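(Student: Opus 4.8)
The plan is to reduce the non-stationary problem \eqref{eq:pde18} to the stationary Dirichlet problem \eqref{eq:pde17} on the cylinder $Q_T = (0,T)\times O\subset\mathbb R^{d+1}$ through the change of variable \eqref{eq:cv01}, and then to invoke Corollary \ref{cor:01}. Fix any $\lambda>0$ (so that Assumption \ref{assumption}(5) holds; the factor $e^{\lambda t}\ell$ is only needed on $[0,T]\times\mathbb R^d$, so one may replace $\ell_1$ by a modification that agrees on a neighborhood of $\bar Q_T$ and vanishes at infinity, without changing the functional). Setting $w(y) = e^{\lambda t}u(t,x)$, a direct computation gives $-\mathcal L_1 w + \lambda w - \ell_1 = -e^{\lambda t}(\partial_t u + \mathcal L_x u + \ell)$, so that, since $e^{\lambda t}>0$, the two equations are equivalent and the interior viscosity sub/supersolution inequalities of Definition \ref{d:vis05} transfer between $u$ and $w$. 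Moreover, reading $w$ as the functional \eqref{eq:fk01} for the process $Y$ of \eqref{eq:sdeY} on $Q_T$ and undoing the time shift yields $w(t,x) = e^{\lambda t}\,\mathbb E^{t,x}[\int_t^{\zeta\wedge T}\ell(s,X_s)\,ds] = e^{\lambda t}v_1(t,x)$. Thus it suffices to show $w$ is a generalized viscosity solution of \eqref{eq:pde17} that attains the datum $0$ on the whole imposed boundary $\mathcal P Q_T\cap\partial Q_T = ((0,T)\times\partial O)\cup(\{T\}\times\bar O)$.

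To apply Corollary \ref{cor:01} (after the obvious rescaling of time to $(0,1)$) I check its two hypotheses for $Y=(t+s,X_{t+s})$. The first coordinate is the deterministic uniform motion $s\mapsto t+s$, a (degenerate) subordinator, so condition (2) holds. Condition (1) requires $\bar O^{c,*}=O^{c}$ with respect to the spatial process $X$ of \eqref{eq:sdeX}, i.e. that every $x\in\partial O$ be regular for $\bar O^{c}$; equivalently $\partial_1 O=\emptyset$. This is exactly the content of the next step.

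The heart of the proof, and the main obstacle, is to verify under the exterior cone condition together with one of (A1)--(A3) that every $x\in\partial O$ satisfies $\mathbb P^{x}(\tau_{\bar O}=0)=1$, i.e. is regular for $\bar O^{c}$. The argument hinges on the competition, as $s\downarrow 0$, between the drift $\bb$ (displacing the path by order $s$) and the $\alpha$-stable fluctuation $\sigma J$ (displacing it by order $s^{1/\alpha}$). Under (A2) the process is the isotropic symmetric $\alpha$-stable motion, for which a point carrying an exterior cone is regular by the classical potential theory of stable processes. Under (A1), since $\alpha\ge 1$ the stable part dominates the drift at small scales, so the same isotropic cone estimate applies and $\bb$ does not affect regularity. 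Under (A3) the drift may dominate when $\alpha<1$, but $\bb(x)\cdot\bv_x>0$ produces a deterministic push into the exterior cone $x+C_{r(x)}(\bv_x,\theta_x)\subset O^{c}$, again forcing immediate exit. In all three cases one concludes $\partial_1 O=\emptyset$. I expect this step to require a barrier construction or a citation to the boundary-regularity theory for stable-type processes (in the spirit of \cite{BS18}), and to be the only genuinely nontrivial estimate.

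With $\partial_1 O=\emptyset$, Corollary \ref{cor:01} gives that $w\in C(\bar Q_T)$ is a generalized viscosity solution of \eqref{eq:pde17}. It remains to pin down the boundary behaviour. The terminal face $\{T\}\times\bar O$ is regular for $\bar Q_T^{c}$ because the time coordinate strictly increases, and each lateral point $(t_0,x_0)$ with $t_0\in(0,T)$, $x_0\in\partial O$ is regular because the now-regular spatial exit drives $Y$ into $(0,T)\times\bar O^{c}\subset\bar Q_T^{c}$ immediately; hence $\mathcal P Q_T\cap\partial Q_T\subset\partial_0 Q_T\subset\Gamma_{out}$ by Lemma \ref{l:gvis01}, so $w\equiv 0$ there. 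The only boundary portion that may be lost is the initial face $\{0\}\times O$, on which Definition \ref{d:vis05} imposes no condition, consistent with it lying in $\partial_1 Q_T$. Transforming back via $v_1=e^{-\lambda t}w$ preserves the interior viscosity inequalities and the vanishing boundary values, so $v_1\in C(\bar Q_T)$ satisfies Definition \ref{d:vis05}(i)--(ii) and is a viscosity solution of \eqref{eq:pde18}.
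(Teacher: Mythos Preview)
Your proposal is correct and follows essentially the same route as the paper: reduce to the stationary problem via $w=e^{\lambda t}v_1$, apply Corollary~\ref{cor:01} to the cylinder $Q_T$ with $Y=(t+s,X_{t+s})$, and verify that every $x\in\partial O$ is regular for $\bar O^c$ under (A1)--(A3). The paper carries out the regularity step in Proposition~\ref{p:meyer2} by projecting $X$ onto the exterior-cone direction $\bv_x$ to obtain a one-dimensional process $Y=X\cdot\bv_x$ and then invoking Sato's classification (Type~B/C L\'evy processes) to conclude $\mathbb P^y(\tau_{(-\infty,y]}(Y)=0)=1$; your scaling heuristic points in the same direction but the projection argument is what makes it precise.
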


\begin{proof}
For $w(t, x) = e^{\lambda t} v_{1} (t, x)$ and $r = s - t$,
  $$w(t, x)
  = e^{\lambda t}\mathbb E^{t, x} \Big[ \int_{t}^{\zeta \wedge T} \ell(s, X_s) ds\Big] = e^{\lambda t}\mathbb E^{t, x} \Big[ \int_{0}^{\zeta \wedge T  - t} \ell(r+ t, X_{r + t}) dr\Big]
.$$
With $Y_s = (t+s, X_{t+s})$ as a $d+1$ dimensional process, and $\zeta_{1}$ as the lifetime of $Y$ in the state space $\bar Q_{T}$, $Y$ follows the dynamic of \eqref{eq:sdeY} with initial state $Y_0 = (t, X_t)$,
 and $\zeta_1$ satisfies
 $$\zeta_{1} := \tau_{\bar Q_{T}} (Y) = \zeta \wedge T - t.$$
  Therefore, $w$ can be represented in terms of $Y$:
$$w(t, x) = e^{\lambda t}\mathbb E^{t, x} \Big[ \int_{0}^{\zeta_{1}} \ell(Y_r) dr\Big]
.$$
Since $Y_{1}(r) = t +r$, a further substitution of $\ell_{1} (y) = e^{\lambda t} \ell(y)$ leads to
$$
w(t, x) = \mathbb E^{t, x} \Big[ \int_{0}^{\zeta_{1}} e^{-\lambda r} e^{\lambda (t+r)}\ell(Y_r) dr\Big] = \mathbb E^{y} \Big[ \int_{0}^{\zeta_{1}} e^{-\lambda r} \ell_{1} (Y_r) dr\Big].
$$

Since $O$ satisfies exterior cone condition, and one of the conditions ({\bf A1}) - ({\bf A3}) holds,
Proposition \ref{p:meyer2} of Section \ref{cone} shows
that every point of $\partial O$ is
regular to $\bar O^{c}$. Then by Corollary \ref{cor:01}, $w$ is a generalized viscosity
 solution of \eqref{eq:pde17}, and $w(t,x) = 0$ if either $t= T$ or $x\in O^c$. Therefore, according to  Definition \ref{d:vis05}, $v_1$ is the viscosity solution of \eqref{eq:pde18}.
\end{proof}

\subsection{Non-stationary nonlinear equation}
Back to the non-linear equation \eqref{eq:pde19},
$$ \left\{
\begin{array}
 {ll}
 - \partial_{t}u -
 |\nabla_{x} u|^{\gamma} + (- \Delta_{x})^{\alpha/2} u + 1 = 0, & \hbox{ on } Q_{T};\\
 u = 0, & \hbox{ on } \mathcal P Q_{T}.
\end{array}\right.
$$
As a starting point, we recall the following result about its sovability (see also \cite{BCI08, CIL92}), which will be referred to as (CP + PM) in the rest of this section:
\begin{itemize}
 \item (CP + PM) Suppose the comparison principle holds and Perron's method is valid. If there exists sub and supersolution, then \eqref{eq:pde19} is uniquely solvable.
\end{itemize}
To concentrate on the application of the Feynman-Kac functional as a generalized viscosity solution, we
will not pursue the validity of (CP+PM) and take it as granted in the discussion below.
The next proposition shows that, our results about the linear equation \eqref{eq:pde18} above help establish the semi-solutions of \eqref{eq:pde19}, as a preparation for (CP+PM) argument.
\begin{proposition}
 \label{p:nleq}
  Let
 $O$ be a bounded open set satisfying
 exterior cone condition. If $\gamma\ge 1$ and $\alpha \in (0, 2)$, then there exist viscosity sub- and supersolutions of \eqref{eq:pde19}.
\end{proposition}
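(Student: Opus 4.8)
The plan is to obtain the supersolution by inspection and the subsolution from Corollary \ref{c:plin}, exploiting that \eqref{eq:pde19} is an HJB equation whose first-order nonlinearity is a pointwise infimum of linear operators. Writing $L$ for the Legendre transform of $p\mapsto|p|^{\gamma}$, one has $-|\nabla_x u|^{\gamma}=\inf_b\big(-b\cdot\nabla_x u+L(b)\big)$, where $b$ ranges over $B_1$ with $L\equiv 0$ when $\gamma=1$, and over $\mathbb R^d$ with $L(b)=c_\gamma|b|^{\gamma'}$, $1/\gamma+1/\gamma'=1$, when $\gamma>1$. In every case $L\ge 0$ and $L(0)=0$, so the operator of \eqref{eq:pde19} is $G(u)=\inf_b F_b(u)$ with $F_b(u)=-\partial_t u-b\cdot\nabla_x u+L(b)+(-\Delta_x)^{\alpha/2}u+1$, whence $G\le F_b$ pointwise for every fixed $b$. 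This domination is exactly what turns a solution of one frozen linear problem into a subsolution of the nonlinear one.

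First I would verify that $\bar u\equiv 0$ is a viscosity supersolution. It meets the boundary condition $\bar u=0$ on $\mathcal P Q_T$, and at an interior $(t,x)\in Q_T$ any subtest function $\phi\le 0$ with $\phi(t,x)=0$ attains a maximum at $(t,x)$, so $\partial_t\phi(t,x)=0$, $\nabla_x\phi(t,x)=0$, and the nonlocal increments $\phi(t,x+y)-\phi(t,x)\le 0$ force $(-\Delta_x)^{\alpha/2}\phi(t,x)\ge 0$; hence $G(\phi,t,x)\ge 1>0$. Thus $\bar u$ is a supersolution, and since the subsolution $v_1$ below satisfies $v_1\le 0=\bar u$, the pair is correctly ordered for the subsequent (CP+PM) argument.

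Next I would freeze the control at $b=0$, where $L(0)=0$, so that the frozen operator is $F_0(u)=-\partial_t u+(-\Delta_x)^{\alpha/2}u+1$. The equation $F_0(u)=0$ is precisely \eqref{eq:pde18} with $\bb\equiv 0$, $|\sigma|=1$ (so $|\sigma|^{\alpha}=1$), and $\ell\equiv -1$, which falls under hypothesis \textbf{(A2)} of Corollary \ref{c:plin}; crucially \textbf{(A2)} places no restriction on $\alpha$, covering the whole range $\alpha\in(0,2)$. Only the values of $\ell$ on the bounded set $\bar O$ enter the functional, so taking $\ell\equiv -1$ there is compatible with the Lipschitz (and, if desired, vanishing-at-infinity) hypotheses. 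Corollary \ref{c:plin} then gives that $v_1(t,x)=\mathbb E^{t,x}\big[\int_t^{\zeta\wedge T}(-1)\,ds\big]=-\mathbb E^{t,x}[\zeta\wedge T-t]$ is a continuous viscosity solution of $F_0=0$, hence in particular a subsolution: $F_0(\phi,t,x)\le 0$ for every $\phi\in J^+(v_1,t,x)$. Since $-|\nabla_x\phi|^{\gamma}\le 0$ yields $G(\phi,t,x)\le F_0(\phi,t,x)\le 0$ for the same test functions, and $v_1=0$ on $\mathcal P Q_T\cap\partial Q_T$, the function $v_1$ is a viscosity subsolution of \eqref{eq:pde19}.

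The step I expect to require the most care is the transfer $F_0(\phi)\le 0\Rightarrow G(\phi)\le 0$ at the level of the viscosity definitions. One must observe that the supertest class $J^+(\cdot,t,x)$ in Definition \ref{d:vis05} depends only on the candidate function and on $Q_T$, not on the operator, so the supertest functions for $v_1$ relative to \eqref{eq:pde19} coincide with those for the linear problem, and that the nonlocal term is evaluated on the test function identically in both problems; the pointwise domination $G\le F_0$ then passes to the test functions verbatim. The remaining items—matching the sign conventions of \eqref{eq:pde18} (an overall sign change preserves the solution property), fixing the coefficient $|\sigma|^{\alpha}=1$, and checking that $\bb\equiv 0$ and $\ell\equiv -1$ satisfy the hypotheses of Corollary \ref{c:plin}—are routine.
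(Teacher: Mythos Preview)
Your proposal is correct and follows essentially the same approach as the paper: $\bar u\equiv 0$ as the supersolution, then Corollary \ref{c:plin} applied with $\bb\equiv 0$, $|\sigma|=1$ (hypothesis \textbf{(A2)}), and $\ell\equiv -1$ to produce a solution of the frozen linear problem, which becomes a subsolution of \eqref{eq:pde19} via the pointwise inequality $-|\nabla_x\phi|^\gamma\le 0$. Your additional observations---that the supertest class $J^+$ is operator-independent, and that $v_1\le 0=\bar u$ gives the ordering needed for (CP+PM)---are correct elaborations the paper leaves implicit.
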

\begin{proof}
First $u = 0$ is supersolution. On the other hand, Corollary \ref{c:plin} confirms that
 the stochastic representation $v_{1}$ of \eqref{eq:v03} with $X \sim - (- \Delta_{x})^{\alpha/2}$ is the viscosity solution
 for
 $$ \left\{
\begin{array}
 {ll}
 - \partial_{t}u + (- \Delta_{x})^{\alpha/2} u + 1 = 0, & \hbox{ on } Q_{T} := (0, T) \times B_{1};\\
 u = 0, & \hbox{ on } \mathcal P Q_{T} := (0, T] \times \mathbb R^{d} \setminus Q_{T}.
\end{array}\right.
$$
By non-negativity of $|\nabla_{x} u|^{\gamma}$, $v_{1}$ is also a viscosity subsolution of \eqref{eq:pde19}.
\end{proof}

\section{Summary} \label{sec:sum}
In this paper, we provide the sufficient condition for
 $v$ of \eqref{eq:fk01} to be the generalized viscosity solution of \eqref{eq:pde01} in Theorem \ref{t:main01}.
 To the best of our knowledge, this is the first result for
 the verification of the Feynman-Kac functional as the generalized viscosity solution of the
 Dirichlet problem in the presence of jump diffusion.
 We also provide Example \ref{exm:15} where  the assumptions in Theorem \ref{t:main01} do not hold
 and the Feynman-Kac functional fails to
 be  continuous.
Not to distract the readers from the main idea, we have rather strong assumptions (Assumption \ref{assumption}) on $g, \ell$, and $\lambda$. However, these conditions could be appropriately relaxed with some mild integrability conditions.

Although the proof of Theorem \ref{t:main01} is mainly probabilistic, it gives an alternative
constructive proof for the existence of generalized viscosity solution on Integro-Differential equation with Dirichlet boundary, which could be utilized for the solvability of nonlinear equation
together with the comparison principle and Perron's method.
In other words, Theorem \ref{t:main01} together with
the probabilistic regularity, e.g. as in Proposition \ref{p:meyer2}, yields a purely analytical result on the
solvability of the Dirichlet problem.
As an application, we considered
an $\mathbb R^{d+1}$-valued process
on a cylinder domain $Q_{T} = (0, T) \times O$ (see Corollary \ref{c:plin}).
If $X_{1}$ is uniform motion in time (i.e., $d X_{1} (t) = dt$) and $X_{-1} = (X_{2}, \ldots, X_{d+1})$ is an $\mathbb R^{d}$-valued process with each point of $\partial O$ regular for $\bar O^{c}$, then
the corresponding Feynman-Kac functional is easily verified as the
generalized viscosity solution of the stationary problem \eqref{eq:pde17}.
Moreover, if one replace the uniform motion $X_{1}$ by a subordinate process, assumptions of Theorem \ref{t:main01} can be verified analogously. 

It is desirable to check if the value of associated stochastic control problem (or nonlinear Feynman-Kac functional) coincides with the solution of \eqref{eq:pde19} constructed from semi-solutions and Perron's method. On the other hand, relaxing the assumption of $\lambda >0$ may result in an extension to gauge theory (see \cite{CR81}  and 
\cite{Son93}). Both are interesting topics for our future work .

\appendix

\section[]{Appendix}

\subsection{Characterization of $\Gamma_{out}$}
\label{sec:go}

From the definitions of $v$  in \eqref{eq:fk01} and of
$\Gamma_{out} = \{x\in \partial O: v = g\}$,  $\Gamma_{out}$ depends on the
function $g$ via $v$, and we explicitly write it as $\Gamma_{out}[g]$ in this section. 

\begin{lemma}
 \label{l:gvis02}
If $\mathbb P^{x}(\zeta<\infty) = 1$ for every  $x$, then
 $\cap_{g\in C_{0}^{0,1}(\mathbb R^{d})} \Gamma_{out}[g] = \partial_0 O
$.
\end{lemma}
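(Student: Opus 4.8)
The plan is to prove the two inclusions separately. The inclusion $\partial_0 O \subseteq \bigcap_{g} \Gamma_{out}[g]$ is already essentially contained in the proof of Lemma \ref{l:gvis01}: if $x\in\partial_0 O$, then $x$ is regular for $\bar O^{c}$, which by definition means $\mathbb P^{x}(\zeta = 0) = 1$. On the event $\{\zeta = 0\}$ the integral term vanishes and $e^{-\lambda\zeta}g(X_\zeta) = g(X_0) = g(x)$, so $v(x) = g(x)$ for \emph{every} $g\in C_0^{0,1}(\mathbb R^d)$, i.e. $x\in\Gamma_{out}[g]$ for all $g$. This direction uses neither $\lambda>0$ nor the finiteness hypothesis.

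For the reverse inclusion $\bigcap_g \Gamma_{out}[g] \subseteq \partial_0 O$, I would first exploit that $v$ depends affinely on $g$. Writing $v_g(x) = I(x) + \mathbb E^{x}[e^{-\lambda\zeta}g(X_\zeta)]$ with $I(x) := \mathbb E^{x}[\int_0^{\zeta} e^{-\lambda s}\ell(X_s)\,ds]$ independent of $g$, the choice $g\equiv 0$ forces $I(x) = v_0(x) = 0$ for any $x\in\bigcap_g\Gamma_{out}[g]$ (equivalently, one may subtract two boundary data, using that $C_0^{0,1}(\mathbb R^d)$ is a vector space). Substituting this back, the defining relation $v_g(x) = g(x)$ collapses to the $\ell$-free identity
\begin{equation}
\label{eq:gout-key}
\mathbb E^{x}\left[e^{-\lambda\zeta} g(X_\zeta)\right] = g(x), \qquad \text{for every } g\in C_0^{0,1}(\mathbb R^d).
\end{equation}
Thus the whole problem reduces to showing that \eqref{eq:gout-key} can hold only at regular points.

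The final step is an approximation argument. I would pick a sequence $g_R\in C_0^{0,1}(\mathbb R^d)$ with $0\le g_R\le 1$, $g_R\equiv 1$ on $B_R$ and $g_R\equiv 0$ off $B_{2R}$ (a radial truncated tent function, which is globally Lipschitz and compactly supported, hence in $C_0^{0,1}(\mathbb R^d)$). Since $\mathbb P^{x}(\zeta<\infty) = 1$, the exit position $X_\zeta$ is almost surely a finite point, so $g_R(X_\zeta)\to 1$ almost surely, while $g_R(x) = 1$ for all large $R$. Because $0\le e^{-\lambda\zeta}g_R(X_\zeta)\le 1$, dominated convergence applied to \eqref{eq:gout-key} yields $\mathbb E^{x}[e^{-\lambda\zeta}] = 1$. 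Since $\lambda>0$ gives $e^{-\lambda\zeta}\le 1$ with equality precisely on $\{\zeta = 0\}$, the relation $\mathbb E^{x}[1 - e^{-\lambda\zeta}] = 0$ with a nonnegative integrand forces $\mathbb P^{x}(\zeta = 0) = 1$; that is, $x$ is regular for $\bar O^{c}$ and $x\in\partial_0 O$.

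The main obstacle is really just making the reduction and the limit legitimate rather than any deep analytic difficulty: the affine dependence on $g$ must be used to eliminate the $g$-independent source term $I(x)$, and the hypothesis $\mathbb P^{x}(\zeta<\infty) = 1$ is exactly what guarantees $g_R(X_\zeta)\to 1$ almost surely (otherwise the mass of $X_\zeta$ could escape to infinity and the dominated convergence step would break down). Once $\mathbb E^{x}[e^{-\lambda\zeta}] = 1$ is established, the conclusion $\zeta = 0$ a.s. is immediate from $\lambda>0$.
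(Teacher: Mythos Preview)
Your argument is correct, and it takes a genuinely different route from the paper's. The paper proves the reverse inclusion contrapositively: for each $x_{0}\in\partial_{1}O$ it exhibits a \emph{single} explicit boundary datum $g(x)=C\,e^{-|x-x_{0}|}$, with the height $C=(\|\ell\|_{\infty}/\lambda+1)/(1-p(x_{0}))$ calibrated using $p(x_{0}):=\mathbb E^{x_{0}}[e^{-\lambda\zeta}]\in(0,1)$, and then bounds $v(x_{0})\le \|\ell\|_{\infty}/\lambda+\|g\|_{\infty}\,p(x_{0})<g(x_{0})$ directly, so $x_{0}\notin\Gamma_{out}[g]$. Your approach instead exploits the affine dependence of $v$ on $g$: the choice $g\equiv 0$ eliminates the source term $I(x)$, reducing membership in $\bigcap_{g}\Gamma_{out}[g]$ to the identity $\mathbb E^{x}[e^{-\lambda\zeta}g(X_{\zeta})]=g(x)$ for all $g$, and an approximation by tent functions then yields $\mathbb E^{x}[e^{-\lambda\zeta}]=1$, hence $\zeta=0$ a.s. The paper's construction is more self-contained (one function, no limit) and never needs to isolate $I(x)$; your argument is slightly more conceptual, making transparent that the obstruction at an irregular point is precisely the strict inequality $\mathbb E^{x}[e^{-\lambda\zeta}]<1$, and it shows that the conclusion really hinges on the pair of hypotheses $\lambda>0$ and $\mathbb P^{x}(\zeta<\infty)=1$ in a clean way.
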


\begin{proof}
	Lemma \ref{l:gvis01} implies that
	$$\cap_{g\in C_{0}^{0,1}(\mathbb R^{d}, \mathbb R)} \Gamma_{out}[g] \supset \partial_0 O.
	$$
	On the other hand, for any $x_{0} \in \partial_1 O$, take
	$$g(x) = e^{-|x- x_{0}|} \frac{\|\ell\|_{\infty}/\lambda +1 }{1 - p(x_{0})},$$
	where $p(x_0) = \mathbb E^{x_0}[e^{-\lambda \zeta}]$. 
	Since $x_{0} \in \partial_1 O$, $\mathbb P^{x_0}(\zeta > 0 ) >0$, and $\mathbb P^{x_0}(\zeta<\infty) = 1$ by assumption, $p(x_{0}) \in (0, 1)$ and $g$ is a well-defined strictly positive function in $C_{0}^{0,1}(\mathbb R^{d})$. Furthermore, \eqref{eq:fk01} yields
	an estimate of $v$:
	\begin{align}
	v(x) <& 1 + \frac{\|\ell\|_{\infty}}{\lambda} + \|g\|_{\infty} p(x_0) = 1 + \frac{\|\ell\|_{\infty}}{\lambda} + \frac{\|\ell\|_{\infty}/\lambda +1 }{1 - p(x_{0})} p(x_0)\\
	 =& \frac{\|\ell\|_{\infty}/\lambda +1 }{1 - p(x_{0})} = g(x_0).
	\end{align}
	Thus $v(x_0)\neq g(x_0)$ and $x_{0} \notin
	\cap_{g\in C_{0}^{0,1}(\mathbb R^{d})} \Gamma_{out}[g]$. By arbitrariness of $x_{0}\in\partial_1 O$, $\cap_{g\in C_{0}^{0,1}(\mathbb R^{d})} \Gamma_{out}[g] = \partial_0 O$.
\end{proof}

\subsection{Regularity under the exterior cone condition}\label{cone}
In this section, we prove the regularity condition used in Corollary \ref{c:plin} for the diffusion $X$ satisfying
$$d X_t = \bb(X_{t}) dt + \sigma d J_{t}.$$

\begin{proposition}
 \label{p:meyer2}
 Let $\bb$ be Lipschitz and $\sigma$ be a constant, and
 $O$ be a bounded open set satisfying
 exterior cone condition with $C_{r(x)}(\bv_{x}, \theta_{x})$.
 In addition, assume that  $(\bb, \sigma)$
 satisfies one of the conditions of ({\bf A1}) - ({\bf A3}).
 Then, any $x\in O^{c}$ is regular for the set $\bar O^{c}$ with respect to the process \eqref{eq:sdeX}, i.e. $O^{c} = \bar O^{c,r} = \bar O^{c,*}$.

 \end{proposition}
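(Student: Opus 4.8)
The goal is to promote the general inclusions $\bar O^{c}\subset\bar O^{c,r}\subset O^{c}$ together with $\bar O^{c,r}=\bar O^{c,*}$ (already recorded in Section \ref{sec:main}) to the equality $O^{c}=\bar O^{c,r}=\bar O^{c,*}$. Since $\bar O^{c}$ and $O$ are already settled, this amounts to showing that \emph{every} $x\in\partial O$ is regular for $\bar O^{c}$, i.e. $\mathbb P^{x}(\zeta=0)=1$, where $\zeta=\tau_{\bar O}(X)$ for $X$ solving \eqref{eq:sdeX}. The plan is to reduce regularity to a single one-dimensional marginal estimate, and then verify that estimate by a scaling argument tuned to the dominant exponent in each of the three regimes (A1)--(A3).

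First I would establish a soft criterion. By Blumenthal's $0$--$1$ law (valid here, see Section \ref{sec:main}), $\mathbb P^{x}(\zeta=0)\in\{0,1\}$. If $\mathbb P^{x}(\zeta=0)=0$, then almost surely $\zeta>0$ and $X_{t}\in\bar O$ for all $t\in[0,\zeta)$, so $I_{\bar O^{c}}(X_{t})\to 0$ as $t\downarrow 0$ and dominated convergence forces $\mathbb P^{x}(X_{t}\in\bar O^{c})\to 0$. Contrapositively, it suffices to prove $\limsup_{t\downarrow 0}\mathbb P^{x}(X_{t}\in\bar O^{c})>0$. Because the truncated exterior cone $x+C_{r(x)}(\bv_{x},\theta_{x})$ is open and contained in $O^{c}$, it lies in $\mathrm{int}(O^{c})=\bar O^{c}$; hence it is enough to show $\limsup_{t\downarrow 0}\mathbb P^{x}(X_{t}-x\in C_{r})>0$, where I abbreviate $C=C(\bv_{x},\theta_{x})$ and $r=r(x)$.

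The core is then to estimate $\mathbb P^{x}(X_{t}-x\in C_{r})$ as $t\downarrow 0$ by splitting $X_{t}-x=D_{t}+\sigma J_{t}$ with $D_{t}=\int_{0}^{t}\bb(X_{s})\,ds$. Right-continuity gives $X_{s}\to x$, hence $|D_{t}|\le Mt$ for small $t$ and $t^{-1}D_{t}\to\bb(x)$; isotropy and self-similarity give $\sigma J_{t}\stackrel{d}{=}t^{1/\alpha}\sigma J_{1}$, and the cone is scale invariant, $\lambda C=C$ for $\lambda>0$. Rescaling by the dominant exponent then handles the cases. If $\alpha>1$, rescale by $t^{-1/\alpha}$: the rescaled drift $t^{-1/\alpha}D_{t}$ is $O(t^{1-1/\alpha})\to 0$, so $t^{-1/\alpha}(X_{t}-x)$ converges in distribution to $\sigma J_{1}$ and $\mathbb P^{x}(X_{t}-x\in C_{r})\to\mathbb P(\sigma J_{1}\in C)>0$, the receding truncation $C\cap B_{t^{-1/\alpha}r}\uparrow C$ only helping and positivity following from the full support of the nondegenerate law of $\sigma J_{1}$ under $|\sigma|>0$. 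If $\alpha=1$, the same rescaling produces the limit law $\bb(x)+\sigma J_{1}$, which again charges the open cone; this covers (A1). In the drift-dominated regime $\alpha<1$ under (A3), rescale instead by $t^{-1}$: now $t^{-1}\sigma J_{t}\stackrel{d}{=}t^{1/\alpha-1}\sigma J_{1}\to 0$ while $t^{-1}(X_{t}-x)\to\bb(x)$ in probability, so the limit is the deterministic vector $\bb(x)$, and $\bb(x)\cdot\bv_{x}>0$ places $\bb(x)$ inside the open exterior cone, whence $\mathbb P^{x}(X_{t}-x\in C_{r})\to 1$.

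I expect the drift-dominated case (A3) with $\alpha<1$ to be the main obstacle, because there the stable fluctuations are of strictly lower order $t^{1/\alpha}=o(t)$ and cannot by themselves steer the process into a possibly narrow cone; one must argue that the drift alone carries the rescaled increment strictly into the open cone, which requires $\bb(x)\cdot\bv_{x}>0$ to genuinely force $\bb(x)\in C(\bv_{x},\theta_{x})$ and that the $o(t)$ jump part does not eject it. A secondary technicality common to all regimes is justifying the limit in the presence of the simultaneously growing truncation ball $B_{t^{-1/\alpha}r}$ (or $B_{t^{-1}r}$); I would treat this by a portmanteau lower bound on the fixed open cone $C$, for which the expanding truncation is harmless. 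Once $\limsup_{t\downarrow 0}\mathbb P^{x}(X_{t}\in\bar O^{c})>0$ holds for every $x\in\partial O$, the first step yields regularity of all boundary points, and therefore $O^{c}=\bar O^{c,r}=\bar O^{c,*}$.
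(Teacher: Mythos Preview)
Your approach differs from the paper's: rather than projecting $X$ onto the cone axis $\bv_{x}$, comparing the resulting one-dimensional process to a genuine L\'evy process, and invoking Sato's Type B/Type C classification (Theorem 47.5 of \cite{Sat13}) for regularity at a half-line, you work directly in $\mathbb R^{d}$ via the criterion $\limsup_{t\downarrow 0}\mathbb P^{x}(X_{t}\in\bar O^{c})>0$ together with a self-similar rescaling of $X_{t}-x$. This is more self-contained and makes the competition between the drift scale $t$ and the jump scale $t^{1/\alpha}$ transparent. For (A1) your argument is clean; (A2) with $\alpha<1$ is not written out but is an immediate specialization, since then $D_{t}\equiv 0$ and the $t^{-1/\alpha}$-rescaling yields the fixed law $\sigma J_{1}$ for every $\alpha$.

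There is, however, a genuine gap in your treatment of (A3). After rescaling by $t^{-1}$ you correctly obtain $t^{-1}(X_{t}-x)\to\bb(x)$ in probability, but you then claim that $\bb(x)\cdot\bv_{x}>0$ ``places $\bb(x)$ inside the open exterior cone''. This implication is false: (A3) only puts $\bb(x)$ in the open half-space $\{y:y\cdot\bv_{x}>0\}$, whereas $\bb(x)\in C(\bv_{x},\theta_{x})$ requires the strictly stronger inequality $\bb(x)\cdot\bv_{x}>|\bb(x)|\,|\bv_{x}|\cos\theta_{x}$. If $\theta_{x}$ is small and $\bb(x)$ makes a large angle with $\bv_{x}$, then $\bb(x)\notin C$ and your portmanteau lower bound on $\mathbb P^{x}(X_{t}-x\in C_{r})$ is zero. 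Since the truncated cone is the only portion of $\bar O^{c}$ you control near $x$, the soft criterion then fails to conclude. You rightly flag this step as ``the main obstacle'', but it is not a technicality to be dispatched: the implication you rely on is simply not a consequence of (A3). The paper instead projects onto $\bv_{x}$ and compares the scalar process to a L\'evy process with drift $\tfrac12\,\bb(x)\cdot\bv_{x}>0$, reading regularity off for the half-line $(y,\infty)$ rather than for the full cone.
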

\begin{proof}
 By the right continuity of the sample path, $\bar O^{c} \subset \bar O^{c, r}$ and $O \cap \bar O^{c,r} = \emptyset$. Therefore, it suffices
 to verify that $\partial O \subset \bar O^{c,r}$.

Fix $x\in \partial O$, and
let $Y = X \cdot \bv_{x} $ be the projection of the process $X$ of \eqref{eq:sdeX}
 on the unit vector $\bv_{x}$ pointing the direction of the exterior cone. Then, $Y$ has a representation of
$$dY_{t} = \hat \bb(X_{t})dt + \hat \sigma  d \hat J_t, \ Y_0 =
x \cdot \bv_{x},$$
where $\hat \bb(x) =  \bb(x) \cdot \bv_{x}$,
$\hat \sigma = |\bv'_{x} \sigma|$, and $\hat J$ is isotropic one dimensional
$\alpha$-stable process
with its generating triplets
$A = 0, \ \nu(d z) = \frac{1}{|z|^{1+\alpha}} dz, \ b = 0$.
To see that $\hat J$ is indeed an $\alpha$-stable process, notice that the characteristic function of $J_1$ is
$$\mathbb E [\exp \{i u \cdot J_1\}] = e^{- c_{0} |u|^{\alpha}}, \
\forall u \in \mathbb R^{d},$$
for some normalizing constant $c_{0}$.
Therefore, the characteristic function of $\hat J_{1}$ is
$$\mathbb E [\exp \{i u \cdot \hat J_{1}\}] =
\mathbb E [\exp \{i u \bv_{x} \cdot J_{1}\}] =
e^{-c_{0} |u\bv_{x}|^{\alpha}} =
e^{-c_{0} |u|^{\alpha}}, \
\forall u \in \mathbb R,$$
and hence $\hat J$ is an $\alpha$-stable process.

By the definition of the exterior cone condition,
the regularity of $x$ for $\bar O^{c}$ with respect to process $X$ can be
implied by
the regularity of $y=x \cdot \bv_{x}$ for the open line segment
$(y, y + r_{x})$ with respect  to the process $Y$.
Moreover, due to the right continuity of the sample path,
it is equivalent to check the regularity of $y$ with respect to the half line $(y, \infty)$, i.e.
$\mathbb P^{y} \left(\tau_{(-\infty, y]}(Y) = 0\right) = 1$.
\begin{itemize}
\item  If $|\sigma|>0$ and $\alpha \ge 1$, then consider
 $$\hat Y_{t}  = y - \sup_{x \in \bar O}|\bb(x) | t + \hat \sigma \hat J_t.$$
 Note that $\hat Y_{t} \le Y_{t}$, but $\hat Y$ is Type C process by \cite{Sat13}
 and
 $\mathbb P^{y}(\tau_{(-\infty, y]} (\hat Y) = 0) = 1$.
 Therefore, $\mathbb P^{y} (\tau_{(-\infty, y]}(Y) = 0) = 1.$
\item If $|\sigma|>0$ and $\bb \equiv 0$, then $X$ is simply an isotropic Levy process and
 $\mathbb P^{y} \left(\tau_{(-\infty, y]}(Y) = 0\right) = 1.$
\item  If $\hat \bb(x) = \bb (x) \cdot \bv_{x} > 0$, then define $h:= \inf\{t \ge 0: \hat \bb(X_{t}) < \frac 1 2 \hat \bb(x)\}$.
 Due to the right continuity of $t \mapsto \hat \bb(X_{t})$, $h>0$
 $\mathbb P^{x}$-almost surely. Consider
 $$\hat Y_{t} = y + \frac 1 2 \hat \bb(x) t + \hat \sigma \hat J_t,$$
 then $Y_{t} \ge \hat Y_{t}$ on $(0, h)$. Moreover, by Theorem 47.5 of \cite{Sat13}, $\hat Y$ is a Type B process with $\frac 1 2 \hat \bb(x)>0$,
and  $\mathbb P^{y}\left(\tau_{(-\infty, y]} (\hat Y_{t})= 0\right) = 1$.
 Therefore, $\mathbb P^{y} \left(\tau_{(-\infty, y]}(Y) = 0\right) = 1.\qedhere$
 \end{itemize}
\end{proof}

\bibliographystyle{plain}


\end{document}